\newcounter{ENUM}
\newcommand{\be}{\begin{enumerate}}
\newcommand{\ee}{\end{enumerate}}
\newcommand{\beas}{\begin{eqnarray*}}
\newcommand{\eeas}{\end{eqnarray*}}
\newcommand{\bea}{\begin{eqnarray}}
\newcommand{\eea}{\end{eqnarray}}
\newcommand{\beq}{\begin{equation}}
\newcommand{\eeq}{\end{equation}}
\newcommand{\st}{\,:\,}
\newcommand{\eqdef}{\mathrel{\mathop:}=}
\newcommand{\jtl}{\mathrm{JT}_\lambda}
\newcommand{\jtq}{\mathrm{JT}_\lambda(q)}
\newcommand{\la}{\lambda}
\newcommand{\ca}{\mathcal{A}}
\newcommand{\cb}{\mathcal{B}}
\newcommand{\calr}{\mathcal{R}}
\newcommand{\sn}{\mathfrak{S}_n}
\newtheorem{thm}{Theorem}[section]
\newtheorem{conj}[thm]{Conjecture}
\theoremstyle{definition}
\newtheorem{ex}[thm]{Example}
\theoremstyle{remark}
\numberwithin{equation}{section}
\newcommand{\bm}[1]{{\boldsymbol{#1}}}
\def\cb{\mathcal B}
\def\rr{\mathbb{R}}
\def\zz{\mathbb{Z}}
\def\nn{\mathbb{N}}
\newcommand{\qq}{\mathbb{Q}}
\newcommand{\diag}{\mathrm{diag}}
\newcommand{\snf}{\stackrel{\mathrm{snf}}{\rightarrow}}
\newcommand{\coker}{\mathrm{coker}}
\subjclass[2010]{05E99, 15A21}
\keywords{Smith normal form, diagonal form, critical group, random
  matrix, Jacobi-Trudi matrix, Varchenko matrix}
\begin{document}
\title{Smith Normal Form in Combinatorics}

\date{\today}

\author{Richard P. Stanley}
\email{rstan@math.mit.edu}
\address{Department of Mathematics, University of Miami, Coral Gables,
FL 33124}

\thanks{Partially supported by NSF grant DMS-1068625.}

\begin{abstract}
This paper surveys some combinatorial aspects of Smith normal form,
and more generally, diagonal form.  The discussion includes general
algebraic properties and interpretations of Smith normal form,
critical groups of graphs, and Smith normal form of random integer
matrices. We then give some examples of Smith normal form and diagonal
form arising from (1) symmetric functions, (2) a result of Carlitz, Roselle,
and Scoville, and (3) the Varchenko matrix of a hyperplane arrangement.
\end{abstract}

\maketitle


\section{Introduction}
Let $A$ be an $m\times n$ matrix over a field $K$. By means of
elementary row and column operations, namely: 
 \be\item
 add a multiple of a row
(respectively, column) to another row (respectively, column), or
  \item multiply a row or column by a unit (nonzero element) of $K$,
 \ee
 we can transform $A$ into a matrix that vanishes off the main diagonal
(so $A$ is a diagonal matrix if $m=n$) and whose main diagonal
consists of $k$ 1's followed by $m-k$ 0's. Moreover, $k$ is uniquely
determined by $A$ since $k=\mathrm{rank}(A)$.

What happens if we replace $K$ by another ring $R$ (which we always
assume to be commutative with identity 1)? We allow the same row and
column operations as before. Condition (2) above is ambiguous since a
unit of $R$ is not the same as a nonzero element. We want the former
interpretation, i.e., we can multiply a row or column by a unit
only. Equivalently, we transform $A$ into a matrix of the form $PAQ$,
where $P$ is an $m\times m$ matrix and $Q$ is an $n\times n$ matrix,
both invertible over $R$. In other words, $\det P$ and $\det Q$ are
units in $R$. Now the situation becomes much more complicated.

We say that $PAQ$ is a \emph{diagonal form} of $A$ if it vanishes off
the main diagonal. (Do not confuse the diagonal form of a square
matrix with the matrix $D$ obtained by diagonalizing $A$. Here
$D=XAX^{-1}$ for some invertible matrix $X$, and the diagonal entries
are the eigenvalues of $A$.) If $A$ has a diagonal form $B$ whose main
diagonal is $(\alpha_1,\dots,\alpha_r,0,\dots,0)$, where $\alpha_i$
divides $\alpha_{i+1}$ in $R$ for $1\leq i\leq r-1$, then we call $B$
a \emph{Smith normal form} (SNF) of $A$. If $A$ is a nonsingular
square matrix, then taking determinants of both sides of the equation
$PAQ=B$ shows that $\det A=u\alpha_1\cdots \alpha_n$ for some unit
$u\in R$. Hence an SNF of
$A$ yields a factorization of $\det A$. Since there is a huge
literature on determinants of combinatorially interesting matrices
(e.g., \cite{kratt1}\cite{kratt2}), finding an SNF of such matrices
could be a fruitful endeavor.

In the next section we review the basic properties of SNF, including
questions of existence and uniqueness, and some algebraic aspects. In
Section~\ref{sec:cgp} we discuss connections between SNF and the
abelian sandpile or chip-firing process on a graph.  The distribution
of the SNF of a random integer matrix is the topic of
Section~\ref{sec:random}. The remaining sections deal with some
examples and open problems related to the SNF of combinatorially
defined matrices.

We will state most of our results with no proof or just the
hint of a proof. It would take a much longer paper to summarize all
the work that has been done on computing SNF for special matrices. We
therefore will sample some of this work based on our own interests and
research. We will include a number of open problems which we hope will
stir up some further interest in this topic. 

\section{Basic properties}
In this section we summarize without proof the
basic properties of SNF. We will use the following notation. If $A$ is
an $m\times n$ matrix over a ring $R$, and $B$ is the matrix with
$(\alpha_1,\dots,\alpha_m)$ on the main diagonal and 0's elsewhere
then we write 
$A\snf(\alpha_1,\dots,\alpha_m)$ to indicate that $B$ is an SNF of
$A$. 

\subsection{Existence and uniqueness}  For 
connections with combinatorics we are primarily interested in the ring
$\zz$ or in polynomial rings over a field or over $\zz$. However, it
is still interesting to ask over what rings $R$ does a matrix always
have an SNF, and how unique is the SNF when it exists. For this
purpose, define an \emph{elementary divisor ring} $R$ to be a ring
over which every matrix has an SNF. Also define a
\emph{B\'ezout ring} to be a commutative ring for
which every finitely generated ideal is principal. Note that a
noetherian B\'ezout ring is (by definition) a principal ideal ring,
i.e., a ring (not necessarily an integral domain) for which every
ideal is principal. An important example of a principal ideal ring
that is not a domain is $\zz/k\zz$ (when $k$ is not prime). Two
examples of non-noetherian B\'ezout domains are the ring of entire
functions and the ring of all algebraic integers. 

\begin{thm} \label{thm:rings}
Let $R$ be a commutative ring with identity.
\be\item If every rectangular matrix over $R$ has an SNF, then $R$ is
a B\'ezout ring. In fact, if $I$ is an ideal with a minimum size
generating set $a_1,\dots,a_k$, then the $1\times 2$ matrix
$[a_1,a_2]$ does not have an SNF. See \cite[p.~465]{kap}.
 \item Every diagonal matrix over $R$ has an SNF if and only if $R$ is
   a B\'ezout ring \cite[(3.1)]{l-l-s}. 
 \item A B\'ezout domain $R$ is an elementary divisor domain if and only
   if it satisfies:
  $$ \mbox{For all $a,b,c\in R$ with $(a,b,c)=R$, there exists $p,q\in
     R$ such that $(pa,pb+qc)=R$.} $$
  See \cite[{\textsection}5.2]{kap}\cite[{\textsection}6.3]{f-s}.
  \item Every principal ideal ring is an elementary divisor ring. This
    is the classical existence result (at least for principal ideal
    domains), going back to Smith \cite{smith} for the integers.
  \item Suppose that $R$ is an \emph{associate ring}, that is, if two
    elements $a$ and $b$ generate the same principal ideal there is a
    unit $u$ such that $ua=b$. (Every integral domain is an associate
    ring.) If a matrix $A$ has an SNF $PAQ$ over $R$, then $PAQ$ is
    unique (up to multiplication of each diagonal entry by a
    unit). This result is immediate from \cite[{\textsection}IV.5,
      Thm.~5.1]{l-q}.  
\ee
\end{thm}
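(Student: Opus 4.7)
The backbone of my approach is the theory of \emph{determinantal ideals}. For an $m\times n$ matrix $A$ over $R$, let $I_k(A)$ be the ideal generated by all $k\times k$ minors of $A$. A Cauchy--Binet expansion shows $I_k(PA)\subseteq I_k(A)$ and $I_k(AQ)\subseteq I_k(A)$ for arbitrary $P,Q$; applying the same inclusions to $P^{-1}$ and $Q^{-1}$ when $P,Q$ are invertible over $R$ yields $I_k(PAQ)=I_k(A)$. These ideals, attached intrinsically to $A$, will serve as the master invariant throughout.

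Parts (1), the ``only if'' half of (2), and (5) all follow quickly from this invariance. For (5), write $PAQ=\mathrm{diag}(\alpha_1,\dots,\alpha_r,0,\dots,0)$ with $\alpha_i\mid\alpha_{i+1}$: the divisibility makes every $k\times k$ minor of the diagonal matrix a multiple of $\alpha_1\alpha_2\cdots\alpha_k$, with the leading principal $k\times k$ minor achieving this value, so $I_k(PAQ)=(\alpha_1\cdots\alpha_k)$. Invariance pins down each principal ideal $(\alpha_1\cdots\alpha_k)$ from $A$ alone, and the associate-ring hypothesis upgrades this to the claim that each $\alpha_k$ is determined up to a unit. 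For (1), if $R$ is not B\'ezout, choose a finitely generated non-principal ideal with a minimum generating set $a_1,\dots,a_k$ where $k\ge 2$; then $(a_1,a_2)$ must itself be non-principal, for otherwise $a_1,a_2$ could be replaced by a single generator, contradicting minimality. The $1\times 2$ row $[a_1,a_2]$ therefore has $I_1=(a_1,a_2)$ non-principal and can admit no SNF, whose $I_1$ would be $(\alpha)$. The ``only if'' half of (2) is the same argument applied to $\mathrm{diag}(a,b)$.

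For the existence parts (2) ``if'', (3), and (4), my plan is constructive. In (2), given $\mathrm{diag}(a,b)$ over a B\'ezout ring, use $pa+qb=d=\gcd(a,b)$ together with $a=da'$, $b=db'$ (which forces $pa'+qb'=1$) to write down explicit unimodular $2\times 2$ matrices carrying $\mathrm{diag}(a,b)$ to $\mathrm{diag}(d,\mathrm{lcm}(a,b))$, and iterate through neighbouring diagonal entries. For (4), over a principal ideal ring I would run the classical Smith algorithm: repeatedly choose a pivot entry whose principal ideal is minimal among the entries remaining, use B\'ezout combinations to clear its row and column, and recurse on the complementary block; the ascending chain condition on principal ideals ensures both termination and that the pivot divides everything that survives. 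The main obstacle is (3): passing from diagonal form to SNF over a B\'ezout ring without the Noetherian crutch requires a genuinely new input, and the condition $(pa,pb+qc)=R$ is precisely calibrated to permit the key $2\times 2$ reduction. For this step I would follow the treatments in \cite{kap,f-s} rather than reinvent the argument.
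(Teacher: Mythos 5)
Your determinantal-ideal framework is sound, and it does give part (1) and the ``only if'' half of (2) cleanly (for what it is worth, the survey itself proves none of this theorem, only citing Kaplansky, Larsen--Lewis--Shores, Smith, and Lombardi--Quitt\'e). The genuine gaps are exactly where the theorem's stated generality bites, namely rings with zero divisors. For (5): knowing the ideals $(\alpha_1\cdots\alpha_k)$ does \emph{not} determine $(\alpha_k)$, and the ``upgrade'' you invoke requires cancelling $\alpha_1\cdots\alpha_{k-1}$, which is illegitimate when that element is a zero divisor or zero. Concretely, over $\mathbb{Z}/8\mathbb{Z}$ the Smith-form matrices $\mathrm{diag}(2,4)$ and $\mathrm{diag}(2,0)$ have identical determinantal ideals $I_1=(2)$, $I_2=(0)$, yet their second diagonal entries generate different ideals; so no argument that uses only the $I_k$ can prove uniqueness (the two matrices are in fact inequivalent --- their cokernels differ --- but your invariant cannot detect this). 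A proof in the stated generality must use more than the minor ideals, e.g.\ recover $(\alpha_{m-k+1})$ as the annihilator of the $k$th exterior power of $\mathrm{coker}(A)$, or quote the module-theoretic uniqueness theorem, which is precisely what the citation to Lombardi--Quitt\'e supplies. Your cancellation argument is fine for integral domains, but ``associate ring'' is meant to cover rings such as $\mathbb{Z}/k\mathbb{Z}$.

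The same issue undermines your constructive plan for the ``if'' half of (2): the identity $pa'+qb'=1$ is obtained by cancelling $d$ in $pa+qb=d$, again unavailable with zero divisors. What your $2\times 2$ reduction actually needs is the Hermite-type property that every pair can be written $a=da'$, $b=db'$ with $(a',b')=R$, and there exist commutative B\'ezout rings without this property (Gillman--Henriksen), so this route cannot prove (2) for B\'ezout \emph{rings}; it proves it for B\'ezout domains, and the general case genuinely requires the Larsen--Lewis--Shores argument. A milder version of the same caveat applies to (4): the pivot-and-ACC sketch is the classical proof over a PID, but over a principal ideal ring with zero divisors one must still explain how to realize the gcd of two incomparable entries by invertible operations --- the usual fix is the Zariski--Samuel structure theorem (a principal ideal ring is a finite product of PIDs and special PIRs, each easy), not B\'ezout combinations alone. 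Deferring (3) to Kaplansky and Fuchs--Salce is reasonable; the survey does the same for the entire theorem.
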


It is open whether every B\'ezout domain is an elementary divisor
domain. For a recent paper on this question, see Lorenzini
\cite{lorenzini}. 

Let us give a simple example where SNF does not exist.

\begin{ex}
Let $R=\zz[x]$, the polynomial ring in one variable over $\zz$, and
let $A=\left[ \begin{array}{cc} 2 & 0\\ 0 & x \end{array}
  \right]$. Clearly $A$ has a diagonal form (over $R$) since it is
already a diagonal matrix. Suppose that $A$ has an SNF $B=PAQ$. The
only possible SNF (up to units $\pm 1$) is $\diag(1,2x)$, since $\det
B = \pm 2x$. Setting $x=2$ in $B=PAQ$ yields the SNF $\diag(1,4)$
over $\zz$, but setting $x=2$ in $A$ yields the SNF $\diag(2,2)$.
\end{ex}

Let us remark that there is a large literature on the computation of
SNF over a PID (or sometimes more general rings) which we will not
discuss. We are unaware of any literature on deciding whether a given
matrix over a more general ring, such as $\qq[x_1,\dots,x_n]$ or
$\zz[x_1,\dots,x_n]$, has an SNF.

\subsection{Algebraic interpretation}
Smith normal form, or more generally diagonal form, has a simple
algebraic interpretation. Suppose that the $m\times n$ matrix $A$ over
the ring $R$ has a diagonal form with diagonal entries
$\alpha_1,\dots,\alpha_m$. The rows $v_1,\dots,v_m$ of $A$ may be
regarded as elements of the free $R$-module $R^n$. 

\begin{thm}
We have
  $$ R^n/(v_1,\dots,v_m) \cong (R/\alpha_1 R)\oplus\cdots \oplus
     (R/\alpha_m R). $$
\end{thm}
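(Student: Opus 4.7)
The plan is to observe that left and right multiplication by invertible matrices preserve the isomorphism class of $R^n/(v_1,\dots,v_m)$ in a suitable sense, so that we may reduce to the case where $A$ is already in diagonal form; for that case the answer is immediate.

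First I would handle row operations. If $P$ is invertible over $R$, the rows $v'_1,\dots,v'_m$ of $PA$ are $R$-linear combinations of the $v_i$, and conversely (applying $P^{-1}$), so the submodules of $R^n$ they generate coincide. In particular $R^n/(v'_1,\dots,v'_m) = R^n/(v_1,\dots,v_m)$, so left multiplication does not affect the quotient at all.

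Next I would handle column operations. View $R^n$ as row vectors; then right multiplication by an invertible $n\times n$ matrix $Q$ is an $R$-module automorphism $\varphi\colon R^n\to R^n$, $x\mapsto xQ$. The rows of $AQ$ are precisely the images $\varphi(v_i)$, so $\varphi$ sends the submodule generated by the $v_i$ bijectively onto the submodule generated by the rows of $AQ$, and hence descends to an isomorphism of the two quotient modules. (This is the main point to keep straight; the subtlety is that column operations act as an automorphism of the ambient $R^n$, not merely as a change of generators of the submodule.)

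Combining these two reductions, we may replace $A$ by $PAQ = D = \diag(\alpha_1,\dots,\alpha_m)$ without altering the isomorphism class of $R^n/(\text{rows of }A)$. Writing $e_1,\dots,e_n$ for the standard basis of $R^n$, the rows of $D$ are $\alpha_i e_i$ for $1\le i\le m$, so the submodule they generate decomposes as $\bigoplus_{i=1}^m \alpha_i R\, e_i \subseteq \bigoplus_{j=1}^n R\, e_j$. Taking the quotient coordinatewise gives
$$R^n/(v_1,\dots,v_m)\;\cong\;\bigoplus_{i=1}^m R/\alpha_i R,$$
with any excess coordinates (when $m<n$) absorbed either by the implicit convention $m=n$ or by adjoining trailing zero $\alpha_i$'s, which contribute free summands $R/0R = R$. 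There is no genuine obstacle beyond tracking the automorphism in the column-operation step.
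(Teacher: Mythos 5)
Your proof is correct and takes essentially the same route as the paper, which simply asserts that the row and column operations do not change the isomorphism class of the quotient and that the diagonal case is tautological; you have filled in exactly those details (rows preserving the submodule, columns acting as an automorphism of $R^n$). Your closing remark about the free summands $R/0R\cong R$ when $m<n$ is a reasonable way to handle a slight imprecision already present in the paper's statement.
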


\begin{proof}
It is easily seen that the allowed row and column operations do not
change the isomorphism class of the quotient of $R^n$ by the rows of
the matrix. Since the conclusion is tautological for diagonal
matrices, the proof follows.
\end{proof}

The quotient module $R^n/(v_1,\dots,v_m)$ is called the
\emph{cokernel} (or sometimes the \emph{Kasteleyn cokernel}) of the
matrix $A$, denoted $\coker(A)$

Recall the basic result from algebra that a finitely-generated module
$M$ over a PID $R$ is a (finite) direct sum of cyclic modules
$R/\alpha_i R$.  Moreover, we can choose the $\alpha_i$'s so that
$\alpha_i|\alpha_{i+1}$ (where $\alpha|0$ for all $\alpha\in R$). In
this case the $\alpha_i$'s are unique up to multiplication by
units. In the case $R=\zz$, this result is the ``fundamental theorem
for finitely-generated abelian groups.'' For a general PID $R$, this
result is equivalent to the PID case of Theorem~\ref{thm:rings}(4). 

\subsection{A formula for SNF}
Recall that a \emph{minor} of a matrix $A$ is the determinant of some
square submatrix. 

\begin{thm} \label{thm:minors}
Let $R$ be a unique factorization domain (e.g., a PID), so that any
two elements have a greatest common divisor (gcd). Suppose that the
$m\times n$ matrix $M$ over $R$ satisfies $M\snf
(\alpha_1,\dots,\alpha_m)$. Then for $1\leq k\leq m$ we have that
$\alpha_1 \alpha_2\cdots \alpha_k$ is equal to the gcd of all $k\times
k$ minors of $A$, with the convention that if all $k\times k$ minors
are 0, then their gcd is 0.
\end{thm}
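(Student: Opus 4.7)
The plan is to show that the quantity $d_k(A) := $ gcd of all $k\times k$ minors of $A$ (well-defined up to units in the UFD $R$) is invariant under left- and right-multiplication by invertible matrices, and then compute $d_k$ directly for the Smith normal form.

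The key tool for invariance is the Cauchy--Binet formula. If $C=AB$, then for any $k$-element index sets $I$ and $J$,
\[
\det(C_{I,J}) \;=\; \sum_{|K|=k} \det(A_{I,K})\det(B_{K,J}),
\]
so every $k\times k$ minor of $AB$ is an $R$-linear combination of products of $k\times k$ minors of $A$ with $k\times k$ minors of $B$. Hence $d_k(A) \mid d_k(AB)$ and $d_k(B)\mid d_k(AB)$. Applying this to $M\mapsto PM$ gives $d_k(M)\mid d_k(PM)$, and applying it to $PM\mapsto P^{-1}(PM)=M$ gives the reverse divisibility; so $d_k(PM)$ and $d_k(M)$ are associates. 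The same argument handles right-multiplication by $Q$. Therefore $d_k(M)$ and $d_k(PMQ)$ are associates.

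It remains to evaluate $d_k$ on the Smith normal form $B=PMQ$, which (after padding with zero columns if $n>m$) has $\alpha_1,\dots,\alpha_m$ on its main diagonal and zeros elsewhere. A $k\times k$ submatrix of $B$ indexed by rows $I$ and columns $J$ is nonzero only when $I=J\subseteq[m]$, in which case its determinant is $\alpha_{i_1}\alpha_{i_2}\cdots\alpha_{i_k}$ with $i_1<\cdots<i_k$. Using the divisibility $\alpha_i\mid\alpha_{i+1}$, every such product is a multiple of $\alpha_1\alpha_2\cdots\alpha_k$, while $\alpha_1\cdots\alpha_k$ itself appears (take $i_j=j$). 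Thus $d_k(B) = \alpha_1\cdots\alpha_k$ up to a unit, and combining with the invariance gives the theorem.

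The only real subtlety is the degenerate case where some $\alpha_i$ vanishes. If $\alpha_r\neq 0$ but $\alpha_{r+1}=\cdots=\alpha_m=0$, then for $k\le r$ the above argument is unaffected, while for $k>r$ every $k\times k$ minor of $B$ vanishes and also $\alpha_1\cdots\alpha_k=0$, so both sides are $0$ in accordance with the stated convention. The main step to get right is the Cauchy--Binet invariance, since everything else is an immediate inspection of the diagonal matrix.
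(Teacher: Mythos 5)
Your proposal is correct and follows essentially the same route as the paper: show that the gcd of the $k\times k$ minors is invariant under the transformation $M\mapsto PMQ$ and then evaluate it directly on the diagonal form, where the divisibility chain $\alpha_i\mid\alpha_{i+1}$ identifies the gcd as $\alpha_1\cdots\alpha_k$. Your Cauchy--Binet argument is just the explicit form of the paper's reduction of the invariance to the $k=1$ case via the $k$th exterior power (whose multiplicativity is precisely Cauchy--Binet), so no genuinely new idea or gap is involved.
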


\noindent \emph{Sketch of proof.} The assertion is easy to check if
$M$ is already in Smith normal form, so we have to show that the
allowed row and column operations preserve the gcd of the $k\times k$
minors. For $k=1$ this is easy. For $k>1$ we can apply the $k=1$ case
to the matrix $\wedge^k M$, the $k$th exterior power of $M$. For
details, see \cite[Prop.~8.1]{m-r}.


\section{The critical group of a graph} \label{sec:cgp}
Let $G$ be a finite graph on the vertex set $V$. We allow multiple
edges but not loops (edges from a vertex to itself). (We could allow
loops, but they turn out to be irrelevant.) Write $\mu(u,v)$ for the
number of edges between vertices $u$ and $v$, and $\deg v$ for the
degree (number of incident edges) of vertex $v$. The \emph{Laplacian
  matrix} $\bm{L}=\bm{L}(G)$ is the matrix with rows and columns
indexed by the elements of $V$ (in some order), with
   $$ \bm{L_{uv}} =\left\{ \begin{array}{rl}
      -\mu(u,v), & \mathrm{if}\ u\neq v\\
      \deg(v), & \mathrm{if}\ u=v. \end{array} \right. $$
The matrix $\bm{L}(G)$ is always singular since its rows sum to
0.\ \  Let $\bm{L_0}=\bm{L_0}(G)$ be $\bm{L}$ with the last row and
column removed. (We can just as well remove any row and column.) The
well-known \emph{Matrix-Tree Theorem} (e.g., \cite[Thm.~5.6.8]{ec2})
asserts that $\det \bm{L_0}=\kappa(G)$, the number of spanning trees
of $G$. Equivalently, if $\#V=n$ and $\bm{L}$ has eigenvalues
$\theta_1,\dots, \theta_n$, where $\theta_n=0$, then $\kappa(G) =
\theta_1\cdots \theta_{n-1}/n$. We are regarding $\bm{L}$ and
$\bm{L_0}$ as matrices over $\zz$, so they both have an SNF. It is
easy to see that $\bm{L_0}\snf (\alpha_1,\dots,\alpha_{n-1})$ if and
only if $\bm{L}\snf (\alpha_1,\dots, \alpha_{n-1},0)$. 

Let $G$ be connected. The group coker$(\bm{L_0})$ has an interesting
interpretation in terms of chip-firing, which we explain below. For
this reason there has been a lot of work on finding the SNF of
Laplacian matrices $\bm{L}(G)$.

A \emph{configuration} is a finite collection $\sigma$ of
indistinguishable chips distributed among the vertices of the graph
$G$. Equivalently, we may regard $\sigma$ as a function $\sigma \colon
V\to \nn=\{0,1,2,\dots\}$. Suppose that for some vertex $v$ we have
$\sigma(v)\geq \deg(v)$. The \emph{toppling} or \emph{firing} $\tau$ of
vertex $v$ is the configuration obtained by sending a chip from $v$ along
each incident edge to the vertex at the other end of the edge. Thus
  $$ \tau(u) = \left\{ \begin{array}{rl}
       \sigma(v)-\deg(v), & u=v\\
       \sigma(u)+\mu(u,v), & u\neq v. \end{array} \right. $$

Now choose a vertex $w$ of $G$ to be a \emph{sink}, and ignore chips
falling into the sink. (We never topple the sink.) This dynamical system
is called the \emph{abelian sandpile} model. A \emph{stable}
configuration is one for which no vertex can topple, i.e.,
$\sigma(v)<\deg(v)$ for all vertices $v\neq w$. It is easy to see that
after finitely many topples a stable configuration will be reached,
which is independent of the order of topples.  (This independence of
order accounts for the word ``abelian'' in ``abelian sandpile.'')

Let $M$ denote the set of all stable configurations.  Define a binary
operation $\oplus$ on $M$ by vertex-wise addition followed by
stabilization. An \emph{ideal} of $M$ is a subset $J\subseteq M$
satisfying $\sigma \oplus J\subseteq J$ for all $\sigma\in M$. The
\emph{sandpile group} or \emph{critical group} $K(G)$ is the minimal
ideal of $M$, i.e., the intersection of all ideals. (Following the
survey \cite{l-p} of Levine and Propp, the reader is encouraged to
prove that the minimal ideal of any finite commutative monoid is a
group.) The group $K(G)$ is independent of the choice of sink up to
isomorphism.

An equivalent but somewhat less abstract definition of $K(G)$ is the
following. A configuration $u$ is called \emph{recurrent} if, for all
configurations $v$, there is a configuration $y$ such that $v\oplus
y=u$. A configuration that is both stable and recurrent is called
\emph{critical}. Given critical configurations $C_1$ and $C_2$,
define $C_1+C_2$ to be the unique critical configuration reachable
from the vertex-wise sum of $C_1$ and $C_2$. 
This operation turns the set of critical configurations into an
abelian group isomorphic to the critical group $K(G)$.

The basic result on $K(G)$ \cite{biggs}\cite{dhar} is the following. 

\begin{thm}
We have $K(G)\cong \coker(\bm{L_0}(G))$. Equivalently, if $\bm{L_0}(G)
\snf (\alpha_1,\dots,\alpha_{n-1})$, then
  $$ K(G) \cong \zz/\alpha_1\zz \oplus \cdots \oplus 
         \zz/\alpha_{n-1} \zz. $$ 
\end{thm}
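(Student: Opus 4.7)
The plan is to first observe that the second form of the statement follows immediately from the first by the algebraic interpretation of Section~2.2: if $\bm{L_0}\snf(\alpha_1,\dots,\alpha_{n-1})$, then $\coker(\bm{L_0}) \cong \zz/\alpha_1\zz\oplus\cdots\oplus\zz/\alpha_{n-1}\zz$. So the real content is the isomorphism $K(G)\cong \coker(\bm{L_0}(G))$.

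The first key step is to translate chip-firing into linear algebra. Firing a non-sink vertex $v$ modifies the configuration restricted to $V\setminus\{w\}$ by subtracting the row of $\bm{L_0}$ indexed by $v$; this is immediate from the displayed formula for $\tau$. Extending the state space from $\nn^{V\setminus\{w\}}$ to $\zz^{V\setminus\{w\}}$ and allowing both firing and ``anti-firing,'' the resulting equivalence classes of integer configurations are precisely the cosets of $\mathrm{image}(\bm{L_0})$, i.e., the elements of $\coker(\bm{L_0})$. This yields a candidate correspondence: to each critical configuration $C$ associate its class $[C] \in \coker(\bm{L_0})$.

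Next, I would construct an inverse map $\Phi:\coker(\bm{L_0})\to K(G)$. Given a class, pick a representative $\sigma$, add a sufficiently large multiple of a fixed non-negative configuration to force non-negativity while staying in the same coset, then stabilize. Only firing moves are used during stabilization, so the output remains in the coset; one then checks, by adding chips from any other configuration and reducing, that the output is not just stable but recurrent, hence critical. Checking that $\Phi$ respects the group laws then reduces to the observation that the sandpile sum $\oplus$ on critical configurations agrees, modulo $\mathrm{image}(\bm{L_0})$, with the ordinary vertex-wise integer sum, since each stabilization step subtracts a row of $\bm{L_0}$.

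The main obstacle is uniqueness: showing that two distinct critical configurations $C_1,C_2$ cannot lie in the same coset of $\mathrm{image}(\bm{L_0})$. The cleanest route avoids a direct chip-firing manipulation and proceeds by cardinality. The Matrix-Tree Theorem, quoted in the section, gives $|\coker(\bm{L_0})|=|\det\bm{L_0}|=\kappa(G)$; on the other hand, Dhar's burning algorithm provides an independent bijection between critical configurations and spanning trees, so $|K(G)|=\kappa(G)$. Combined with the surjectivity of the natural map $K(G)\to\coker(\bm{L_0})$ established above, both sides are finite of equal size, so the surjection is a bijection. A direct combinatorial proof of uniqueness (without appeal to spanning-tree counts) is possible via an exchange/least-action-principle argument, but the counting route is both shorter and illuminates why the order of $K(G)$ is the number of spanning trees.
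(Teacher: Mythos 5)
The paper states this theorem without proof, merely citing Biggs and Dhar, so there is no internal argument to measure you against; what you have written is essentially the standard chip-firing proof found in the literature the section points to (e.g.\ the Levine--Propp survey). Your outline is sound: firing a non-sink vertex subtracts the corresponding row of $\bm{L_0}$, so the class map $C\mapsto[C]\in\coker(\bm{L_0})$ is well defined and converts the sandpile sum $\oplus$ into addition of cosets; surjectivity comes from stabilizing a large representative of each coset; and injectivity follows by counting, since $\#\coker(\bm{L_0})=\det\bm{L_0}=\kappa(G)$ by the Matrix-Tree Theorem while the burning-algorithm bijection gives $\#K(G)=\kappa(G)$.

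Two points deserve tightening. First, in the surjectivity step the vector you add to force non-negativity must itself lie in the lattice spanned by the rows of $\bm{L_0}$, or you leave the coset; you should justify that a strictly positive vector exists in that lattice, which uses connectivity of $G$ --- for instance $\bm{L_0}\,\mathrm{adj}(\bm{L_0})\mathbf{1}=\kappa(G)\mathbf{1}$ exhibits one with positive integer entries. You also owe a word on why the stabilization of a sufficiently large configuration is recurrent, though this is a routine standard lemma. Second, your counting step imports the burning-algorithm bijection between recurrent configurations and spanning trees, a result of roughly the same depth as the theorem itself; this is legitimate but heavy, and the alternative you mention (direct uniqueness of the recurrent representative in each coset via a least-action or exchange argument) keeps the proof self-contained and is closer to the arguments in the sources the paper cites.
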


Note that by the Matrix-Tree Theorem we have $\#K(G)=\det \bm{L_0(G)}
= \kappa(G)$. Thus the critical group $K(G)$ gives a canonical
factorization of $\kappa(G)$. When $\kappa(G)$ has a ``nice''
factorization, it is especially interesting to determine $K(G)$. The
simplest case is $G=K_n$, the complete graph on $n$ vertices. We have
$\kappa(K_n)=n^{n-2}$, a classic result going back to Sylvester and
Borchardt. There is a simple trick for computing $K(K_n)$ based on
Theorem~\ref{thm:minors}. Let $\bm{L_0}(K_n)\snf
(\alpha_1,\dots,\alpha_{n-1})$. Since $\bm{L_0}(K_n)$ has an entry
equal to $-1$, it follows from Theorem~\ref{thm:minors} that
$\alpha_1=1$. Now the $2\times 2$ submatrices (up to row and column
permutations) of $\bm{L_0}(K_n)$ are given by
  $$ \left[ \begin{array}{cc} n-1 & -1\\ -1 & n-1 \end{array} \right],
    \quad
   \left[ \begin{array}{cc} n-1 & -1\\ -1 & -1\end{array} \right],
    \quad
   \left[ \begin{array}{cc} -1 & -1\\ -1 & -1 \\ \end{array} \right],
   $$
with determinants $n(n-2)$, $-n$, and 0. Hence $\alpha_2=n$ by
Theorem~\ref{thm:minors}. Since $\prod \alpha_i = \pm n^{n-2}$ and
$\alpha_i|\alpha_{i+1}$, we get $K(G)\cong (\zz/n\zz)^{n-2}$. 

\textsc{Note.} A similar trick works for the matrix $M=\left[
  \binom{2(i+j)}{i+j}\right]_{i,j=0}^{n-1}$, once it is known that
 $\det M=2^{n-1}$ (e.g., \cite[Thm.~9]{e-r-r}). Every entry of $M$ is
even except for $M_{00}$, so $2|\alpha_2$, yielding $M\snf
(1,2,2,\dots,2)$. The matrix
$\left[\binom{3(i+j)}{i+j}\right]_{i,j=0}^{n-1}$ is much more
  complicated. For instance, when $n=8$ the diagonal elements of the
  SNF are
  $$ 1,\ 3,\ 3,\ 3,\ 3,\ 6,\ 2\cdot 3\cdot 29\cdot 31,\ 2\cdot
  3^2\cdot 11\cdot 
      29\cdot 31\cdot 37\cdot 41. $$
It seems that if $d_n$ denotes the number of diagonal entries of the
SNF that are equal to 3, then $d_n$ is close to $\frac 23n$. The least
$n$ for which $|d_n-\lfloor \frac 23n\rfloor|>1$ is $n=224$. 
For the determinant of $M$, see \cite[(10)]{g-x}. If
$M=\left[ \binom{a(i+j)}{i+j}\right]_{i,j=0}^{n-1}$ for $a\geq 4$,
then $\det M$ does not seem ``nice'' (it doesn't factor into small
factors).

The critical groups of many classes of graphs have been computed. As a
couple of nice examples, we mention threshold graphs (work of
B. Jacobson \cite{jac}) and Paley graphs (D. B. Chandler, P. Sin, and
Q. Xiang \cite{c-s-x}). Critical groups have been generalized in
various ways. In particular, A. M. Duval, C. J. Klivans, and
J. L. Martin \cite{d-k-m} consider the critical group of a simplicial
complex.

\section{Random matrices} \label{sec:random}
There is a huge literature on the distribution of eigenvalues and
eigenvectors of a random matrix. Much less has been done on the
distribution of the SNF of a random matrix. We will restrict our
attention to the situation where $k\geq 0$ and $M$ is an $m\times n$
integer matrix with independent entries uniformly distributed in the
interval $[-k,k]$, in the limit as $k\to\infty$. We write
$P_k^{(m,n)}(\mathcal{E})$ for the probability of some event under
this model (for fixed $k$).  To illustrate that the distribution of
SNF in such a model might be interesting, suppose that $M\snf
(\alpha_1,\dots,\alpha_m)$. Let $j\geq 1$. The probability
$P_k^{(m,n)}(\alpha_1=j)$ that $\alpha_1=j$ is equal to the
probability that $mn$ integers between $-k$ and $k$ have gcd equal to
$j$. It is then a well-known, elementary result that when $mn>1$, 
  \beq \lim_{k\to\infty}P_k^{(m,n)}(\alpha_1=j) =
  \frac{1}{j^{mn}\zeta(mn)}, \label{eq:a1ej} \eeq
where $\zeta$ denotes the Riemann zeta function. This suggests
looking, for instance, at such numbers as
  $$ \lim_{k\to\infty}P_k^{(m,n)}(\alpha_1=1,
    \alpha_2=2, \alpha_3=12). $$ 
In fact, it turns out that if $m<n$ and we specify 
the values $\alpha_1,\dots, \alpha_m$ (subject of course to
$\alpha_1|\alpha_2|\cdots |\alpha_{m-1}$), then the probability as
$k\to\infty$ exists and is strictly between 0 and 1. For $m=n$ the
same is true for specifying $\alpha_1,\dots,\alpha_{n-1}$. However, for
any  $j\geq 1$, we have $\lim_{k\to\infty}P_k^{(n,n)}(\alpha_n=j)=0$. 

The first significant result of this nature is due to Ekedahl
\cite[{\textsection}3]{ekedahl}, namely, let 
  $$ \sigma(n) = \lim_{k\to\infty} P_k^{(n,n)}(\alpha_{n-1}=1). $$
Note that this number is just the probability (as $k\to\infty$) that
the cokernel of the $n\times n$ matrix $M$ is cyclic (has one
generator). Then 
   \beq  \sigma(n) = \frac{\prod_p
  \left(1+\frac{1}{p^2}+\frac{1}{p^3}+\cdots+\frac{1}{p^n}\right)}
   {\zeta(2)\zeta(3)\cdots}, \label{eq:eke1} \eeq
where $p$ ranges over all primes. It is not hard to deduce that
   \bea \lim_{n\to\infty}\sigma(n)  & = &
    \frac{1}{\zeta(6)\prod_{j\geq 4}\zeta(j)} \label{eq:eke}\\[.5em] &
    = & 0.84693590173\cdots. \nonumber\eea
At first sight it seems surprising that this latter probability is not
1.\ \ It is the probability (as $k\to\infty$, $n\to\infty$) that the
$n^2$ $(n-1)\times (n-1)$ minors of $M$ are relatively prime. Thus the
$(n-1)\times (n-1)$ minors do not behave at all like $n^2$ independent
random integers.

Further work on the SNF of random integer matrices appears in
\cite{wood2} and the references cited there. These papers are
concerned with powers of a fixed prime $p$ 
dividing the $\alpha_i$'s. Equivalently, they are working (at least
implicitly) over the $p$-adic integers $\zz_p$. The first paper to
treat systematically SNF over $\zz$ is by Wang and Stanley
\cite{w-s}. One would expect that the behavior of the prime power
divisors to be independent for different primes as $k\to\infty$. This
is indeed the case, though it takes some work to prove. 
In particular, for any positive integers $h\leq m\leq n$ and
$a_1|a_2|\cdots|a_h$ Wang and Stanley determine
 $$ \lim_{k\to\infty}P_k^{(m,n)}(\alpha_1=a_1,\dots,\alpha_h=a_h). $$ 
A typical result is the following:
  \beas \lim_{k\to\infty}P_k^{(n,n)}(\alpha_1=2,\alpha_2=6) & = &
       2^{-n^2}\left(1-\sum_{i=(n-1)^2}^{n(n-1)}2^{-i}+
         \sum_{i=n(n-1)+1}^{n^2-1}2^{-i}\right)\\ & & \cdot
        \frac 32\cdot 3^{-(n-1)^2}(1-3^{(n-1)^2})(1-3^{-n})^2\\
    & & \cdot \prod_{p>3}\left(1-\sum_{i=(n-1)^2}^{n(n-1)}p^{-i}+
         \sum_{i=n(n-1)+1}^{n^2-1}p^{-i}\right). \eeas

A further result in \cite{w-s} is an extension of Ekedahl's formula
\eqref{eq:eke1}. The authors obtain explicit formulas for
 $$ \rho_j(n)\eqdef\lim_{k\to\infty} P_k^{(n,n)}(\alpha_{n-j}=1), $$ 
i.e., the probability (as $k\to\infty$) that the cokernel of $M$ has
at most $j$ generators. Thus \eqref{eq:eke1} is the case
$j=1$. Write $\rho_j=\lim_{n\to\infty} \rho_j(n)$. Numerically we have
  \beas \rho_1 & = & 0.846935901735\\
        \rho_2 & = & 0.994626883543\\
        \rho_3 & = & 0.999953295075\\
        \rho_4 & = & 0.999999903035 \\
        \rho_5 & = &  0.999999999951. \eeas
The convergence $\rho_n\to 1$ looks very rapid. In fact
\cite[(4.38)]{w-s}, 
  $$ \rho_n = 1 -
         c\,2^{-(n+1)^2}(1-2^{-n}+O(4^{-n})), $$ 
where
  $$ c = \frac{1}{(1-\frac 12)(1-\frac 14)(1-\frac 18)\cdots}
       = 3.46275\cdots. $$

A major current topic related to eigenvalues and eigenvectors of
random matrices is \emph{universality} (e.g., \cite{t-v}). A
certain distribution of eigenvalues (say) occurs for a large class of
probability distributions on the matrices, not just for a special
distribution like the GUE model on the space of $n\times n$ Hermitian
matrices. Universality of SNF over the rings $\zz_p$ of $p$-adic
integers and over $\zz/n\zz$ was considered by Kenneth Maples
\cite{maples}. 
On the other hand, Clancy, Kaplan, Leake, Payne and Wood \cite{cklpw}
make some conjectures for the SNF distribution of the Laplacian matrix
of an Erd\H{o}s-R\'enyi random graph that differs from the
distribution obtained in \cite{w-s}. (It is clear, for instance, that
$\alpha_1=1$ for Laplacian matrices, in contradistinction to
equation~\eqref{eq:a1ej}, but conceivably equation~\eqref{eq:eke} could
carry over.)  Some progress on these conjectures was made by Wood
\cite{wood}.

\section{Symmetric functions}
\subsection{An up-down linear transformation}
Many interesting matrices arise in the theory of symmetric
functions. We will adhere to notation and terminology on this subject
from \cite[Chap.~7]{ec2}. For our first example, let $\Lambda_\qq^n$
denote the $\qq$-vector space of homogeneous symmetric functions
of degree $n$ in the variables $x=(x_1,x_2,\dots)$ with rational 
coefficients. One basis for $\Lambda_\qq^n$ consists of the
\emph{Schur functions} $s_\lambda$ for $\lambda\vdash n$. Define a
linear transformation $\psi_n\colon\Lambda_\qq^n\to\Lambda_\qq^n$
by
 $$ \psi_n(f) = \frac{\partial}{\partial p_1}p_1f. $$ 
Here $p_1=s_1=\sum x_i$, the first power sum symmetric function. The
notation $\frac{\partial}{\partial p_1}$ indicates that we
differentiate with respect to $p_1$ after writing the argument as a
polynomial in the $p_k$'s, where $p_k=\sum x_i^k$. It is a standard
result \cite[Thm.~7.15.7, Cor.~7.15.9, Exer.~7.35]{ec2} that
for $\lambda\vdash n$,
   \beas p_1 s_\lambda & = & \sum_{\substack{\mu\vdash
       n+1\\ \mu\supset\lambda}}s_\mu\\
      \frac{\partial}{\partial p_1}s_\lambda & = & s_{\lambda/1}\ = \
      \sum_{\substack{\mu\vdash  n-1\\ \mu\subset\lambda}} s_\mu. \eeas
Note that the power sum $p_\lambda$, $\lambda\vdash n$, is an
eigenvector for $\psi_n$ with eigenvalue
$m_1(\lambda)+1$, where $m_1(\lambda)$ is the number of 1's in
$\lambda$. Hence
  $$ \det \psi_n = \prod_{\lambda\vdash n}
             (m_1(\lambda)+1). $$ 
The factorization of $\det\psi_n$ suggests looking at the SNF of
$\psi_n$ with respect to the basis $\{s_\lambda\}$. We denote this
matrix by $[\psi_n]$. Since the matrix transforming the $s_\lambda$'s
to the $p_\mu$'s is not invertible over $\zz$, we cannot simply
convert the diagonal matrix with entries $m_1(\lambda)+1$ to SNF.  As
a special case of a more general conjecture Miller and Reiner
\cite{m-r} conjectured the SNF of $[\psi_n]$, which was then proved by
Cai and Stanley \cite{c-s}. Subsequently Nie \cite{nie} and Shah
\cite{shah} made some further progress on the conjecture of Miller and
Reiner.  We state two equivalent forms of the result of Cai and
Stanley.

\begin{thm} \label{thm:cai}
 Let $[\psi_n]\snf (\alpha_1,\dots,\alpha_{p(n)})$, where $p(n)$
 denotes the number of partitions of $n$.
\be\item[(a)] The $\alpha_i$'s are as follows:
  \begin{itemize}
    \item $(n+1)(n-1)!$, with multiplicity $1$
    \item $(n-k)!$, with multiplicity $p(k+1)-2p(k)+p(k-1)$, $3\leq
      k\leq n-2$
    \item $1$, with multiplicity $p(n)-p(n-1)+p(n-2)$. 
   \end{itemize}
 \item[(b)] Let $\mathcal{M}_1(n)$ be the multiset of all numbers
   $m_1(\lambda)+1$, for $\lambda\vdash n$. Then $\alpha_{p(n)}$ is the
   product of the \emph{distinct} elements of $\mathcal{M}_1(n)$;
   $\alpha_{p(n)-1}$ is the product of the remaining \emph{distinct}
   elements of $\mathcal{M}_1(n)$, etc.
\ee
\end{thm}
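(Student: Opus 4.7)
The plan is to first reduce part (a) to the more structural statement (b) by a direct bookkeeping argument. The value $k+1$ appears in $\mathcal{M}_1(n)$ with multiplicity $p(n-k) - p(n-k-1)$, because partitions $\lambda \vdash n$ with $m_1(\lambda) = k$ correspond to partitions of $n-k$ with no part equal to $1$; consequently the distinct values appearing are $\{1, 2, \ldots, n-1, n+1\}$, with $n$ absent and $n+1$ appearing once. In the column-by-column peeling of (b), the first column (that of $\alpha_{p(n)}$) contains all distinct values, giving product $(n+1)(n-1)!$; each subsequent column contains only the values $\{1, \ldots, j_c\}$ (by monotonicity of multiplicities) and has product $j_c!$. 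Counting the columns producing each factorial value, one finds that $(n-k)!$ arises $p(k+1) - 2p(k) + p(k-1)$ times for $3 \leq k \leq n-2$ and that $p(n) - p(n-1) + p(n-2)$ columns yield $1$, matching (a).

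For the proof of (b) itself, I would exploit the differential-poset identity $D_{n+1} U_n - U_{n-1} D_n = I_n$ for Young's lattice (with $U = \cdot\, p_1$ and $D = \partial/\partial p_1$), which gives $\psi_n = I + UD$ as an operator on $\Lambda^n$. A simple induction yields $D^i U = U D^i + i D^{i-1}$; hence for $f \in F_i := \ker(D^{i+1}) \subseteq \Lambda^n$ one has $D^i(\psi_n f) = (i+1) D^i f$. Thus $\psi_n$ preserves the integral filtration $F_{-1} = 0 \subset F_0 \subset F_1 \subset \cdots \subset F_n = \Lambda^n$ and acts as the scalar $i+1$ on the quotient $F_i / F_{i-1}$. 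Each $F_i$ is a saturated $\zz$-submodule of $\Lambda^n$ (being the kernel of a $\zz$-linear map into a free $\zz$-module), so $F_i / F_{i-1}$ is free of rank $p(n-i) - p(n-i-1)$, recovering the eigenvalue multiplicities of $\psi_n$. Choosing a $\zz$-basis of $\Lambda^n$ adapted to this filtration, $[\psi_n]$ becomes block upper triangular with scalar diagonal blocks $(i+1) I$ of these ranks.

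Finally, I would invoke Theorem \ref{thm:minors}: $\alpha_1 \cdots \alpha_k$ equals the gcd of all $k \times k$ minors of $[\psi_n]$. The block upper triangular form gives the divisibility direction easily, since any $k \times k$ minor is an integer combination of products of diagonal scalar block entries and off-diagonal terms, each accumulating enough factors $(i+1)$ to guarantee divisibility by the prescribed product from (b). The main obstacle, and the technical heart of the proof, is the matching lower bound: exhibiting a specific $k \times k$ submatrix whose determinant realizes this gcd exactly. This depends on the integral structure of the off-diagonal blocks $B_{ij}$, which encode the failure of the filtration to split over $\zz$; the natural rational splitting $\Lambda^n \otimes \qq = \bigoplus_i U^i(\ker D \cap \Lambda^{n-i})_\qq$ into eigenspaces has denominators that must be tracked carefully against the diagonal scalars $1, 2, \ldots, n+1$. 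My preferred strategy is to localize prime-by-prime over $\zz_p$: after localization one can often arrange via block row/column operations that only the nearest superdiagonal blocks remain and take a uniform controlled form, making the minor computation tractable; the global SNF then assembles from these $p$-local pieces using independence of prime power contributions.
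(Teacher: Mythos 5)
Your reduction of (a) to (b) is correct, and so is the filtration setup: with $U$ denoting multiplication by $p_1$ and $D=\partial/\partial p_1$ one has $\psi_n=I+UD$, the submodules $F_i=\ker D^{i+1}$ are $\psi_n$-stable, $\psi_n$ acts by the scalar $i+1$ on $F_i/F_{i-1}$, and these quotients are free of rank $p(n-i)-p(n-i-1)$. But from that point on there is a genuine gap. Block upper triangularity with scalar diagonal blocks does not by itself give either direction of the gcd-of-minors computation: for instance $\left[\begin{smallmatrix}2&1\\0&2\end{smallmatrix}\right]$ preserves a flag and acts by the scalar $2$ on both quotients, yet its SNF is $\mathrm{diag}(1,4)$ rather than the ``layered'' answer $\mathrm{diag}(2,2)$. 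A $k\times k$ minor can be chosen to lie largely or entirely in the off-diagonal blocks, so no factors $(i+1)$ are forced on it; hence your claim that the divisibility direction is ``easy'' is unsupported, and the matching lower bound---which you yourself identify as the technical heart---is left to a speculative localization strategy (``one can often arrange\dots'') rather than an argument. The entire content of Theorem \ref{thm:cai} is precisely the integral behavior of the off-diagonal data relative to the scalars $1,2,\dots,n+1$ (equivalently, the failure or control of an integral splitting of the filtration), and that is exactly what remains unproved in your proposal.

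For comparison, the proof referred to in the paper (Cai--Stanley, as summarized after Theorem \ref{thm:cai2}) does not go through Theorem \ref{thm:minors} at all: one passes from $\{s_\lambda\}$ to the basis $\{h_\lambda\}$ (the transition matrix is integral and unimodular) and exhibits explicit row and column operations proving the stronger statement that $[\psi_n+tI]$ has an SNF over $\zz[t]$; Theorem \ref{thm:cai} then follows by substituting $t=-\lambda$ for each eigenvalue and comparing ranks, using that $[\psi_n]$ is symmetric and hence semisimple. To salvage your route you would need explicit integral control of the off-diagonal blocks of your filtered form---which is essentially equivalent to producing the row and column operations of the actual proof.
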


In fact, the following stronger result than Theorem~\ref{thm:cai} is
actually proved. 

\begin{thm} \label{thm:cai2}
Let $t$ be an indeterminate. Then the matrix $[\psi_n+tI]$ 
has an SNF over $\zz[t]$. 
\end{thm}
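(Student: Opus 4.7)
The plan is to lift the SNF from Theorem~\ref{thm:cai} to an SNF of $[\psi_n+tI]$ over $\zz[t]$ by replacing each scalar eigenvalue by its linear analogue $k+t$, and then to prove the lift both by computing gcds of minors and by constructing the reduction explicitly.

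Since $\psi_n(p_\lambda)=(m_1(\lambda)+1)p_\lambda$, we have $\det([\psi_n+tI])=\prod_{\lambda\vdash n}(m_1(\lambda)+1+t)$ in $\zz[t]$, and the distinct values of $m_1(\lambda)+1$ are $\{1,2,\dots,n-1,n+1\}$. Guided by Theorem~\ref{thm:cai}(b), let $\mathcal{M}_1(n,t)$ be the multiset $\{m_1(\lambda)+1+t : \lambda\vdash n\}$. The candidate SNF has $\alpha_{p(n)}$ equal to the product of the distinct elements of $\mathcal{M}_1(n,t)$, $\alpha_{p(n)-1}$ equal to the product of the distinct elements of what remains, and so on. Divisibility $\alpha_i\mid\alpha_{i+1}$ is automatic from this construction, and setting $t=0$ recovers Theorem~\ref{thm:cai} exactly.

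To verify this guess I would proceed in two stages. First, because $\zz[t]$ is a UFD, Theorem~\ref{thm:minors} applies: if an SNF exists at all, then $\alpha_1\cdots\alpha_k$ equals the gcd of the $k\times k$ minors of $[\psi_n+tI]$. Since $[\psi_n]$ is a Pieri-type matrix in the Schur basis --- nonnegative integer entries governed by adding and then removing a box --- many submatrices have combinatorially computable determinants, and I would locate ones whose determinants hit exactly the guessed product. Second, I would exhibit $P(t),Q(t)\in GL_{p(n)}(\zz[t])$ of unit determinant that realize the reduction. The natural tactic is to order the Schur basis by $m_1(\lambda)$, observe that the Pieri rule forces $[\psi_n]$ into block-triangular form with respect to this order, and then reduce inductively, using the bijection between partitions of $n$ with $m_1=k$ and partitions of $n-k$ having no part equal to $1$ to recurse on $n$.

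The main obstacle is the second stage. Existence of an SNF over $\zz[t]$ is not automatic: even $\mathrm{diag}(x-2,x)\in\zz[x]^{2\times 2}$ has none, since $(x-2,x)$ is not a principal ideal. So one cannot simply diagonalize via the $s\to p$ change of basis and clear denominators. What must be used is that the eigenvalues of $\psi_n$ lie in a short consecutive run of integers, so that the ideals $(k+t,k'+t)=(k+t,k-k')$ contain small constants and permit genuine Bezout-type operations over $\zz[t]$; aligning these operations with the block structure furnished by the $m_1$-filtration so as to produce precisely the predicted divisibility chain is the technical heart of the Cai--Stanley proof.
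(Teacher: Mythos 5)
Your guess for the diagonal entries (replace each $m_1(\lambda)+1$ by $m_1(\lambda)+1+t$ and take successive products of distinct elements) is the right one, and you correctly identify that the whole difficulty is \emph{existence} of an SNF over $\zz[t]$: stage one (gcd of $k\times k$ minors via Theorem~\ref{thm:minors}) can only tell you what the SNF must be \emph{if} it exists, so everything rests on stage two. But stage two, as proposed, does not work. Your structural claim is false: in the Schur basis the $(\nu,\lambda)$ entry of $[\psi_n]$ is the number of common covers of $\lambda$ and $\nu$ in Young's lattice, so the matrix is symmetric ($\psi_n$ is self-adjoint for the Hall inner product), and it has nonzero entries between partitions whose values of $m_1$ differ; e.g.\ $(2,2)$ and $(2,1,1)$, with $m_1=0$ and $m_1=2$, share the cover $(2,2,1)$. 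A symmetric matrix that were block-triangular for your $m_1$-ordering would have to be block-diagonal, which it is not, so the $m_1$-filtration gives no triangular structure to start the induction, and the recursion you sketch never produces the required $P(t),Q(t)\in GL_{p(n)}(\zz[t])$. You explicitly defer the ``technical heart'' rather than supply it, so the proposal is a plausible plan plus a correct conjecture, not a proof.

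The missing idea, which is how the paper (following Cai and Stanley) proceeds, is to change basis first: pass from $\{s_\lambda\}$ to the complete homogeneous basis $\{h_\lambda\}$, which is legitimate because the transition matrix is integral and unimodular, hence does not affect the SNF over $\zz[t]$. In the $h$-basis one computes $\psi_n(h_\lambda)=(m_1(\lambda)+1)h_\lambda+\sum_{a\geq 2} m_a(\lambda)\,h_\mu$, where $\mu$ is obtained from $\lambda$ by replacing a part $a$ by the two parts $a-1,1$; since each such $\mu$ is strictly lower in dominance order, the matrix $[\psi_n+tI]$ in this basis is triangular with the linear polynomials $m_1(\lambda)+1+t$ on the diagonal and $t$-free integer entries elsewhere. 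Explicit row and column operations over $\zz[t]$ --- exploiting, as you anticipate, that differences of the diagonal entries are small integers, so ideals like $(k+t,k'+t)$ contain nonzero constants --- then bring this triangular matrix to the predicted SNF. Without the unimodular change of basis that produces genuine triangularity, your argument has no way to get started.
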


To see that Theorem~\ref{thm:cai2} implies Theorem~\ref{thm:cai}, use
the fact that $[\psi_n]$ is a symmetric matrix (and therefore
semisimple), and for each eigenvalue $\lambda$ of $\psi_n$ consider
the rank of the matrices obtained by substituting $t=-\lambda$ in
$[\psi_n+tI]$ and its SNF over $\zz[t]$. For details and further
aspects, see \cite[{\textsection}8.2]{m-r}.

The proof of Theorem \ref{thm:cai2} begins by working with the basis
$\{h_\lambda\}$ of complete symmetric functions rather than with the
Schur functions, which we can do since the transition matrix between
these bases is an integer unimodular matrix. The proof then consists
basically of describing the row and column operations to achieve SNF. 

The paper \cite{c-s} contains a conjectured generalization of 
Theorem~\ref{thm:cai2} to the operator $\psi_{n,k}\eqdef
k\frac{\partial}{\partial 
  p_k}p_k\colon \Lambda_\qq^n\to \Lambda_\qq^n$ for any $k\geq 1$.
Namely, the matrix $[\psi_{n,k}+tI]$ with 
respect to the basis $\{s_\lambda\}$ has an SNF over $\zz[t]$. This
implies that if $[\psi_{n,k}]\snf (\alpha_1,\dots,\alpha_{p(n)})$ and
$\mathcal{M}_k(n)$ denotes the multiset of all numbers
$k(m_k(\lambda)+1)$, for $\lambda\vdash n$, then $\alpha_{p(n)}$ is
the product of the \emph{distinct} elements of $\mathcal{M}_k(n)$;
$\alpha_{p(n)-1}$ is the product of the remaining \emph{distinct}
elements of $\mathcal{M}_k(n)$, etc. This conjecture was proved in
2015 by Zipei Nie (private communication).

There is a natural generalization of the SNF of $\psi_{n,k}$, namely,
we can look at operators like $(\prod
\lambda_i)\frac{\partial^\ell}{\partial p_\lambda}p_\lambda$. Here
$\lambda$ is a partition of $n$ with $\ell$ parts and
  $$ \frac{\partial^\ell}{\partial p_\lambda} =
   \frac{\partial^\ell}{\partial p_1^{m_1}\partial p_2^{m_2}\cdots}, $$
where $\lambda$ has $m_i$ parts equal to $i$. Even more generally, if
$\lambda, \mu\vdash n$ where $\lambda$ has $\ell$ parts, then we could
consider $(\prod \lambda_i)\frac{\partial^\ell}{\partial
  p_\lambda}p_\mu$. No conjecture is known for the SNF (with respect
to an integral basis), even when $\lambda=\mu$.

\subsection{A specialized Jacobi-Trudi matrix}
A fundamental identity in the theory of symmetric functions is the
\emph{Jacobi-Trudi identity}. Namely, if $\lambda$ is a partition with
at most $t$ parts, then the \emph{Jacobi-Trudi
  matrix} $\jtl$ is defined by
  $$ \jtl=\left[ h_{\lambda_i+j-i}\right]_{i,j=1}^t, $$
where $h_i$ denotes the complete symmetric function of degree $i$
(with $h_0=1$ and $h_{-i}=0$ for $i\geq 1$). 
The \emph{Jacobi-Trudi identity} \cite[{\textsection}7.16]{ec2}
asserts that 
$\det \jtl =s_\lambda$, the Schur function indexed by $\lambda$.

For a symmetric function $f$, let $\varphi_n f$ denote the
specialization $f(1^n)$, that is, set $x_1=\cdots=x_n=1$ and all other
$x_i=0$ in $f$. It is easy to see \cite[Prop.~7.8.3]{ec2} that
   \beq \varphi_n h_i =\binom{n+i-1}{i}, \label{eq:phihi} \eeq
a polynomial in $n$ of degree $i$. Identify $\lambda$ with its (Young)
diagram, so the squares of $\lambda$ are indexed by pairs $(i,j)$,
$1\leq i\leq \ell(\lambda)$, $1\leq j\leq \lambda_i$. The
\emph{content} $c(u)$ of the square $u=(i,j)$ is defined to be
$c(u)=j-i$. A standard result \cite[Cor.~7.21.4]{ec2} in the theory of
symmetric functions states that
  \beq \varphi_n s_\lambda =\frac{1}{H_\lambda}\prod_{u\in\lambda}
   (n+c(u)), \label{eq:hc} \eeq
where $H_\lambda$ is a positive integer whose value is irrelevant here
(since it is a unit in $\qq[n]$). Since this polynomial factors a lot
(in fact, into linear factors) over $\qq[n]$, we are motivated to
consider the SNF of the matrix 
  $$ \varphi_n\jtl = \left[ \binom{n+\lambda_i+j-i-1}{\lambda_i+j-i}
     \right]_{i,j=1}^t. $$
Let $D_k$ denote the $k$th \emph{diagonal hook} of $\lambda$, i.e.,
all squares $(i,j)\in\lambda$ such that either $i=k$ and $j\geq k$, or
$j=k$ and $i\geq k$. Note that $\lambda$ is a disjoint union of its
diagonal hooks. If $r=\mathrm{rank}(\lambda)\mathrel{\mathop:}= \max\{
i\st \lambda_i\geq i\}$, then note also that $D_k=\emptyset$ for
$k>r$. The following result was proved in \cite{rs:jt}.

\begin{thm} \label{thm:jt} 
Let $\varphi_n\jtl\snf (\alpha_1,\alpha_2,\dots,\alpha_t)$, where
$t\geq \ell(\lambda)$. Then we can take
  $$ \alpha_i = \prod_{u\in D_{t-i+1}} (n+c(u)). $$
\end{thm}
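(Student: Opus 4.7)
The plan is to determine the Smith normal form one prime at a time, working over the localization $\mathbb{Q}[n]_{(n+c)}$ for each integer $c$, and then assembling the results. By the hook content formula \eqref{eq:hc}, $\det(\vjt) = \varphi_n s_\lambda$ equals $\prod_{u\in\lambda}(n+c(u))$ up to the rational unit $H_\lambda$. Every irreducible factor of this determinant is of the form $(n+c)$ with $c\in\mathbb{Z}$, and since each $\alpha_i$ divides the determinant, each $\alpha_i$ is itself a product of such linear factors. It therefore suffices, for each $c\in\mathbb{Z}$, to determine the $(n+c)$-adic valuation of every $\alpha_i$.

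Let $m_c := \#\{u\in\lambda\st c(u)=c\}$. Since the diagonal hooks partition $\lambda$, the $m_c$ cells of content $c$ lie one per hook in $D_1,\dots,D_{m_c}$; so the claimed SNF is equivalent to the assertion that the $(n+c)$-adic invariant factors of $\vjt$ consist of $t-m_c$ units followed by $m_c$ copies of $(n+c)$. The sum of the $(n+c)$-valuations of the $\alpha_i$'s is $v_{n+c}(\det(\vjt))=m_c$, and each valuation is nonnegative; hence it suffices to prove that the $\mathbb{Q}$-rank of $\vjt$ evaluated at $n=-c$ equals exactly $t-m_c$. Indeed this forces precisely $m_c$ invariant factors to be divisible by $(n+c)$, and the valuation budget then forces each such factor to be divisible by $(n+c)$ exactly once. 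The monotonicity $\alpha_1\mid\alpha_2\mid\cdots\mid\alpha_t$ then pins down which factors these are.

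Using $h_k(1^{-c}) = (-1)^k\binom{c}{k}$, the matrix $\vjt$ at $n=-c$ becomes, up to a diagonal matrix of signs (row factor $(-1)^{\lambda_i-i}$ times column factor $(-1)^j$), the binomial matrix
\[
B_c := \bigl[\tbinom{c}{\lambda_i+j-i}\bigr]_{i,j=1}^{t}.
\]
To show $\mathrm{rank}(B_c) = t-m_c$, I would exhibit $m_c$ linearly independent kernel vectors indexed by the cells of $\lambda$ of content $c$---discovered by recognizing a row of $B_c$ as shifted coefficients of $(1+y)^c$ and using the Pascal recurrence $\binom{c}{k}=\binom{c-1}{k}+\binom{c-1}{k-1}$---together with a nonvanishing $(t-m_c)\times(t-m_c)$ minor, most naturally expressed as a Lindstr\"om--Gessel--Viennot path count on the surviving rows and columns.

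The main obstacle is pinpointing the kernel dimension \emph{exactly}. Producing $m_c$ syzygies is the more tractable half; certifying that no further relation holds---equivalently, producing a nonvanishing complementary submatrix---requires a delicate choice of which rows and columns to retain. The needed combinatorial structure is sensitive to the shape $\lambda$ and to the sign of $c$ (cells of content $c\ge 0$ populate the upper-right portion of $\lambda$, while those of content $c<0$ populate the lower-left, and for $|c|$ large $B_c$ becomes very sparse with a non-obvious nonzero pattern), so handling all integers $c$ in a uniform framework is the technical heart of the argument.
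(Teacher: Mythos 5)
Your reduction is sound, and it is genuinely different in flavor from the paper's argument: you localize at each linear prime $(n+c)$, observe that the cells of content $c$ occupy the hooks $D_1,\dots,D_{m_c}$ one apiece, and note that since evaluation at $n=-c$ sends the transforming matrices $P,Q$ to invertible matrices over $\qq$, the number of invariant factors divisible by $(n+c)$ equals $t-\mathrm{rank}\bigl(\varphi_n\jtl|_{n=-c}\bigr)$; the determinant formula \eqref{eq:hc} then supplies the valuation budget, and the divisibility chain pins down which $\alpha_i$ pick up the factor. (The paper instead computes the gcd of the $k\times k$ minors directly, using that the bottom-left $k\times k$ submatrix $M_k$ is itself a Jacobi--Trudi matrix of a partition $\mu^k$ and that every $k\times k$ minor of $\varphi_n\jtl$ is divisible by $\varphi_n M_k$ via the Littlewood--Richardson rule, and then invokes Theorem~\ref{thm:minors}.) The translation of the evaluated matrix into $B_c=\bigl[\binom{c}{\lambda_i+j-i}\bigr]$ up to row and column signs is also correct.

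The gap is that the one statement carrying all the content of the theorem, namely $\mathrm{rank}(B_c)=t-m_c$ for every content $c$ of $\lambda$, is not proved: you say you ``would exhibit'' $m_c$ kernel vectors and a nonvanishing $(t-m_c)\times(t-m_c)$ minor, and your closing paragraph concedes that neither half is carried out and that the uniform treatment in $c$ and $\lambda$ is ``the technical heart.'' Without it, nothing beyond elementary bookkeeping has been established; note also that the upper bound on the rank (the $m_c$ independent syzygies) is not obviously easier than the lower bound, since a row of $B_c$ is a length-$t$ window of the sequence $\binom{c}{k}$ shifted by $\lambda_i-i$, and the exact dimension of the span of such windows is precisely the delicate point (and your sparsity heuristic for $|c|$ large is not accurate: for $c<0$ no entry vanishes at all). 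For the lower bound, the natural certificate is exactly the paper's: the bottom-left $(t-m_c)\times(t-m_c)$ submatrix specializes to $\varphi_n s_{\mu^{t-m_c}}$, a product of factors $n+c(u)$ over $u\in\mu^{t-m_c}$, and one must show $(n+c)$ does not occur among them --- which is essentially the computation your outline defers. So as it stands this is a plausible program with the decisive step missing, not a proof.
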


An equivalent statement to Theorem~\ref{thm:jt} is that the $\alpha_i$'s
are squarefree (as polynomials in $n$), since $\alpha_t$ is the
largest squarefree factor of $\varphi_n s_\lambda$, $\alpha_{t-1}$ is
the largest squarefree factor of $(\varphi_n s_\lambda)/\alpha_t$, etc.

\begin{ex} 
Let $\lambda=(7,5,5,2)$. Figure~\ref{fig1} shows the diagram of
$\lambda$ with the content of each square. Let $t=\ell(\lambda)=4$. We
see that 
  \beas \alpha_4 & = & (n-3)(n-2)\cdots (n+6)\\
        \alpha_3 & = & (n-2)(n-1)n(n+1)(n+2)(n+3)\\
        \alpha_2 & = & n(n+1)(n+2)\\
        \alpha_1 & = & 1. \eeas

\begin{figure}
\centering
\centerline{\includegraphics[width=6cm]{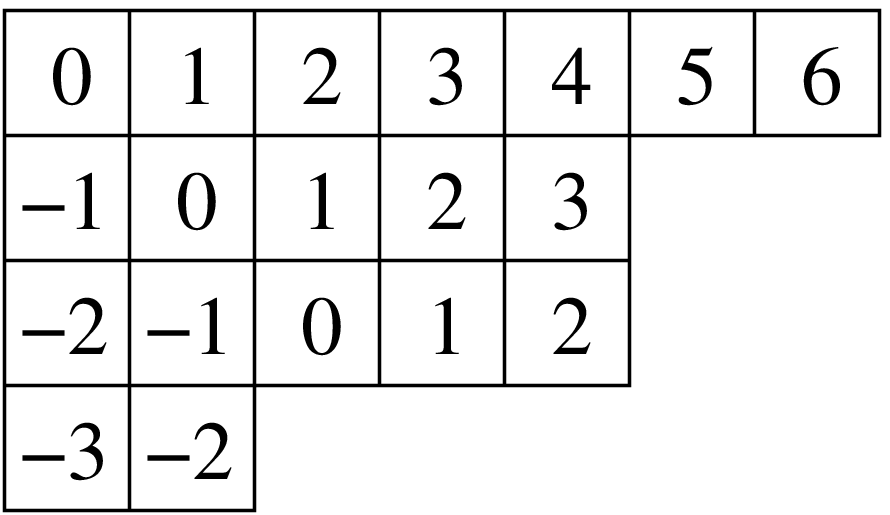}}
\caption{The contents of the partition $(7,5,5,2)$}
\label{fig1}
\end{figure}

\end{ex}

The problem of computing the SNF of a suitably specialized
Jacobi-Trudi matrix was raised by Kuperberg \cite{kup}. His Theorem~14
has some overlap with our Theorem~\ref{thm:jt}. Propp
\cite[Problem~5]{propp} mentions a two-part question of Kuperberg. The
first part is equivalent to our Theorem~\ref{thm:jt} for rectangular
shapes. (The second part asks for an interpretation in terms of
tilings, which we do not consider.)
 
Theorem~\ref{thm:jt} is proved not by the more usual method of row and
column operations. Rather, the gcd of the $k\times k$ minors is
computed explicitly so that Theorem~\ref{thm:minors} can be
applied. Let $M_k$ be the bottom-left $k\times k$ submatrix of
$\jtl$. Then $M_k$ is itself the Jacobi-Trudi matrix of a certain
partition $\mu^k$, so $\varphi_n M_k$ can be explicitly evaluated. One
then shows using the Littlewood-Richardson rule that every $k\times k$
minor of $\varphi_n\jtl$ is divisible by $\varphi_n M_k$. Hence
$\varphi_n M_k$ is the gcd of the $k\times k$ minors of
$\varphi_n\jtl$, after which the proof is a routine computation.

There is a natural $q$-analogue of the specialization $f(x) \to
f(1^n)$, namely, $f(x)\to f(1,q,q^2,\dots,q^{n-1})$. Thus we can ask
for a $q$-analogue of Theorem~\ref{thm:jt}. This can be done using the
same proof technique, but some care must be taken in order to get a
$q$-analogue that reduces directly to Theorem~\ref{thm:jt} by setting
$q=1$. When this is done we get the following result
\cite[Thm.~3.2]{rs:jt}. 

\begin{thm} \label{thm:jtq}
For $k\geq 1$ let
  $$ f(k) =\frac{n(n+\boldsymbol{(1)})(n+\boldsymbol{(2)})\cdots 
     (n+\boldsymbol{(k-1)})}{\boldsymbol{(1)}\boldsymbol{(2)}
    \cdots \boldsymbol{(k)}}, $$
where $\boldsymbol{(j)}=(1-q^j)/(1-q)$ for any $j\in\zz$.
Set $f(0)=1$ and $f(k)=0$ for $k<0$. Define 
  $$ \jtq=\left[f(\lambda_i-i+j)\right]_{i,j=1}^t, $$
where $\ell(\lambda)\leq t$. Let $\jtq\snf
(\gamma_1,\gamma_2,\dots,\gamma_t)$ over the ring 
$\qq(q)[n]$. Then we can take
  $$ \gamma_i = \prod_{u\in D_{t-i+1}}(n+\boldsymbol{c(u)}). $$
\end{thm}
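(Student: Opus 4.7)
The plan is to mimic the proof of Theorem~\ref{thm:jt} sketched in the preceding discussion, now working over the UFD $R = \qq(q)[n]$ and applying Theorem~\ref{thm:minors}. Concretely, I would show that for each $1 \leq k \leq t$ the gcd of the $k \times k$ minors of $\jtq$ equals, up to a unit of $\qq(q)$,
$$ G_k \eqdef \prod_{i=t-k+1}^{t}\prod_{u \in D_i}(n + \boldsymbol{c(u)}). $$
Granting this, Theorem~\ref{thm:minors} forces $\gamma_1 \gamma_2 \cdots \gamma_k = G_k$ for each $k$, and taking ratios of consecutive values of $k$ yields $\gamma_i = \prod_{u \in D_{t-i+1}}(n+\boldsymbol{c(u)})$ as claimed.

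For the direction ``$\gcd$ divides $G_k$'', I would exhibit a single $k\times k$ minor equal to $G_k$ up to a unit, namely the bottom-left $k \times k$ submatrix $M_k$ of $\jtq$. Its $(i',j')$-entry $f(\lambda_{t-k+i'} - (t-k+i') + j')$ rewrites as $f(\mu^k_{i'} - i' + j')$, where $\mu^k$ is the partition with parts $\mu^k_{i'} = \lambda_{t-k+i'} - (t-k)$; geometrically, $\mu^k$ is the union of the last $k$ diagonal hooks $D_{t-k+1} \cup \cdots \cup D_t$ of $\lambda$ shifted to the origin. Thus $M_k$ is itself a matrix of the form $\mathrm{JT}_{\mu^k}(q)$, so a $q$-analogue of the Jacobi-Trudi identity combined with the $q$-hook-content formula evaluates $\det M_k$ to $\prod_{u \in \mu^k}(n+\boldsymbol{c(u)})$ divided by a unit $H_{\mu^k}(q) \in \qq(q)^\times$. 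Since contents are preserved by the diagonal shift, this equals $G_k$ up to a unit.

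For the direction ``$G_k$ divides every $k\times k$ minor'', I would use the generalized Jacobi-Trudi identity to express an arbitrary $k\times k$ minor of $\jtq$ as the principal specialization (in the $q$-sense underlying $f$) of a skew Schur function $s_{\alpha/\beta}$ with $|\alpha|-|\beta| = k$, where $\alpha$ and $\beta$ are read off the chosen row and column indices. Expanding $s_{\alpha/\beta} = \sum_\nu c^\alpha_{\beta\nu}\, s_\nu$ by the Littlewood-Richardson rule and evaluating each $s_\nu$ via the $q$-hook-content formula writes the minor as a $\qq(q)[n]$-linear combination of content-products $\prod_{v \in \nu}(n+\boldsymbol{c(v)})$, where every participating $\nu$ has a content multiset containing that of $\mu^k$. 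Each such product is then divisible by $G_k$, establishing the required divisibility.

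The main obstacle, and the ``care'' referred to in the excerpt, is the normalization of $f(k)$. The naive $q$-hook-content formula outputs factors $(1-q^{n+c(u)})$, and a careless $q$-analogue would produce $\gamma_i$'s in $\qq[q,q^n]$ rather than $\qq(q)[n]$, defeating the entire setup (and not reducing to Theorem~\ref{thm:jt} at $q=1$). The displayed $f(k)$ — with $n$ appearing untouched as a linear factor in the numerator and a $q$-factorial $\boldsymbol{(1)}\boldsymbol{(2)}\cdots\boldsymbol{(k)}$ in the denominator — is exactly the normalization that replaces each $(1-q^{n+c(u)})$ by $(n+\boldsymbol{c(u)})$ and thereby lands all computations in $\qq(q)[n]$. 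With this fix, every remaining step is either purely algebraic (Jacobi-Trudi, Theorem~\ref{thm:minors}) or a statement about contents rather than $q$ (the Littlewood-Richardson divisibility), so the content-level combinatorics of \cite{rs:jt} transfers verbatim.
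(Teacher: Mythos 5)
Your proposal is essentially the paper's own route: Theorem~\ref{thm:jtq} is obtained (in \cite{rs:jt}, which the survey cites) by exactly the gcd-of-minors strategy you describe --- the bottom-left $k\times k$ submatrix viewed as a Jacobi--Trudi matrix of the partition $\mu^k$ formed from the last $k$ diagonal hooks, Littlewood--Richardson divisibility of every other $k\times k$ minor, and then Theorem~\ref{thm:minors} --- with the ``care'' mentioned in the survey being precisely the normalization of $f(k)$ that you single out. The only points your sketch elides (which the survey's sketch of Theorem~\ref{thm:jt} also glosses over) are the cases where $\mu^k$ would have negative parts, e.g.\ already $k=1$ for $\lambda=(7,5,5,2)$ or any $k<t-\lambda_t$ when $t>\ell(\lambda)$, where the bottom-left minor is identically zero and one must exhibit a different minor realizing $G_k$, and the fact that $h_k\mapsto f(k)$ is not a ring specialization, so the Jacobi--Trudi and hook-content evaluations must be re-derived for $f$ over $\qq(q)[n]$ rather than quoted.
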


\section{A multivariate example}
In this section we give an example where the SNF exists over a
multivariate polynomial ring over $\zz$. Let $\lambda$ be a partition,
identified with its Young diagram regarded as a set of squares; we fix
$\lambda$ for all that follows.  Adjoin to $\lambda$ a border strip
extending from the end of the first row to the end of the first column
of $\lambda$, yielding an \emph{extended partition} $\lambda^*$. Let
$(r,s)$ denote the square in the $r$th row and $s$th column of
$\lambda^*$. If $(r,s)\in\lambda^*$, then let $\lambda(r,s)$ be the
partition whose diagram consists of all squares $(u,v)$ of $\lambda$
satisfying $u\geq r$ and $v\geq s$. Thus $\lambda(1,1)=\lambda$, while
$\lambda(r,s)=\emptyset$ (the empty partition) if
$(r,s)\in\lambda^*\setminus\lambda$. Associate with the square $(i,j)$
of $\lambda$ an indeterminate~$x_{ij}$. Now for each square $(r,s)$
of~$\lambda^*$, associate a polynomial $P_{rs}$ in the
variables~$x_{ij}$, defined as follows:
\beq
P_{rs} = \sum_{\mu\subseteq
  \lambda(r,s)}\prod_{(i,j)\in\lambda(r,s)\setminus \mu} x_{ij},
  \label{eq:prsdef}
\eeq
where $\mu$ runs over all partitions contained in $\la (r,s)$.
In particular, if $(r,s)\in\lambda^*\setminus \lambda$ then $P_{rs}=1$.
Thus for $(r,s)\in\lambda$, $P_{rs}$ may be regarded as a generating
function for the squares
of all skew diagrams $\lambda(r,s)\setminus \mu$.  For instance, if
$\lambda=(3,2)$ and we set $x_{11}=a$, $x_{12}=b$, $x_{13}=c$,
$x_{21}=d$, and $x_{22}=e$, then Figure~\ref{fig2} shows the extended
diagram $\lambda^*$ with the polynomial $P_{rs}$ placed in the
square~$(r,s)$.

\begin{figure}
\centerline{\includegraphics[width=8cm]{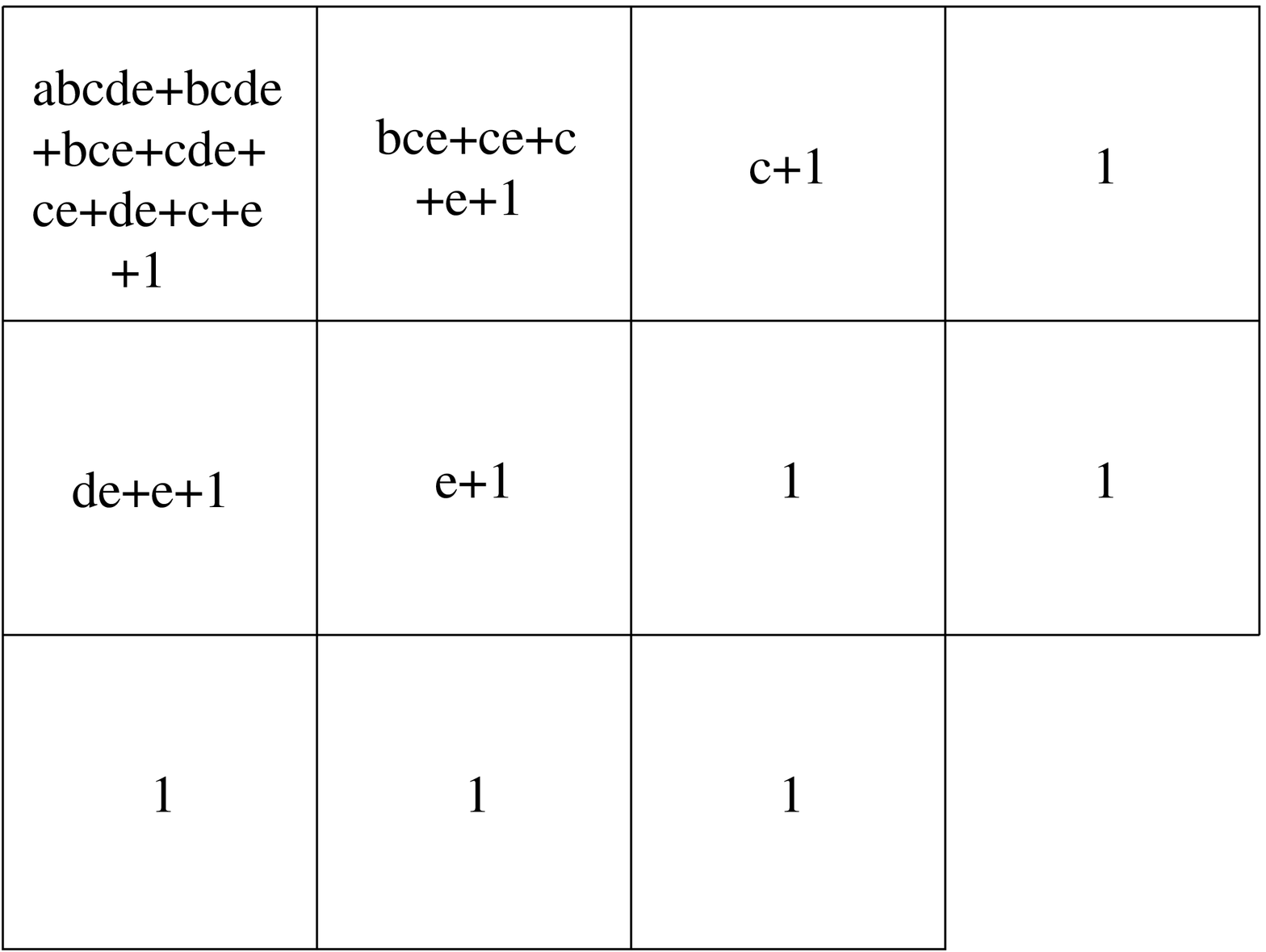}}
\caption{The polynomials $P_{rs}$ for $\lambda=(3,2)$}
\label{fig2}
\end{figure}

Write
  $$ A_{rs}=\prod_{(i,j)\in\lambda(r,s)} x_{ij}. $$
Note that $A_{rs}$ is simply the leading term of $P_{rs}$.
Thus for $\lambda=(3,2)$ as in Figure~\ref{fig2} we have
$A_{11}=abcde, A_{12}=bce$, $A_{13}=c$, $A_{21}=de$, and
$A_{22}=e$.

For each square $(i,j)\in\lambda^*$ there will be a unique subset of
the squares of $\lambda^*$ forming an $m\times m$ square $S(i,j)$ for
some $m\geq 1$, such that the upper left-hand corner of $S(i,j)$ is
$(i,j)$, and the lower right-hand corner of $S(i,j)$ lies in
$\lambda^*\setminus \lambda$. In fact, if $\rho_{ij}$ denotes the
\emph{rank} of $\lambda(i,j)$ (the number of squares on the main
diagonal, or equivalently, the largest $k$ for which
$\lambda(i,j)_k\geq k$), then $m=\rho_{ij}+1$. Let $M(i,j)$ denote the
matrix obtained by inserting in each square $(r,s)$ of $S(i,j)$ the
polynomial $P_{rs}$. For instance, for the partition $\lambda=(3,2)$
of Figure~\ref{fig2}, the matrix $M(1,1)$ is given by
 $$ M(1,1) = \left[ \begin{array}{ccc}
  P_{11} & bce+ce+c+e+1 & c+1\\ de+e+1 & e+1 & 1\\
  1 & 1 & 1 \end{array} \right], $$
where $P_{11}=abcde+bcde+bce+cde+ce+de+c+e+1$. Note that for this
example we have
  $$ \det M(1,1)= A_{11}A_{22}A_{33}=abcde\cdot e\cdot 1=abcde^2. $$

The main result on the matrices $M(i,j)$ is the following. For
convenience we state it only for $M(1,1)$, but it applies to any
$M(i,j)$ by replacing $\lambda$ with $\lambda(i,j)$. 

\begin{thm} \label{thm:bessen}
Let $\rho=\mathrm{rank}(\lambda)$. The matrix $M(1,1)$ has an SNF over
$\zz[x_{ij}]$, given explicitly by   
   $$ M(1,1)\snf
      (A_{11},A_{22},\dots,A_{\rho+1,\rho+1}). $$  
Hence $\det M(1,1)=A_{11}A_{22}\cdots A_{\rho\rho}$ (since
$A_{\rho+1,\rho+1}=1$). 
\end{thm}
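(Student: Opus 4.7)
My plan is to induct on the rank $\rho$ of $\lambda$. The base case $\rho=0$ forces $\lambda=\emptyset$, $M(1,1)=[1]$ and $A_{11}=1$, which is trivial. For the inductive step the key structural observation is that the bottom-right $\rho\times\rho$ submatrix of $M(1,1)$, obtained by deleting its first row and first column, is exactly $M(2,2)$, the corresponding matrix for the smaller partition $\lambda(2,2)$ of rank $\rho-1$. By the inductive hypothesis this inner matrix already has SNF $(A_{22},\ldots,A_{\rho+1,\rho+1})$ over $\zz[x_{ij}]$. So it suffices to find invertible row and column operations over $\zz[x_{ij}]$ reducing $M(1,1)$ to a block-diagonal form $\mathrm{diag}(A_{11},M(2,2))$, disturbing the $M(2,2)$ block only by further invertible operations.

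The engine for the reduction should be a recursion expressing
$$P_{rs} \;=\; A_{rs} + \text{($\zz[x_{ij}]$-linear combination of }P_{r's'}\text{ with } r'>r \text{ or } s'>s\text{)},$$
which I expect to extract from the definition \eqref{eq:prsdef} by decomposing the sum over $\mu\subseteq\lambda(r,s)$ according to whether $\mu$ contains the corner $(r,s)$, and then further by the lengths of the first row and first column of $\mu$: the term $\mu=\emptyset$ contributes $A_{rs}$, while each nonempty $\mu$ is parameterized by a choice of row and column sizes together with a residual subpartition of $\lambda(r+1,s+1)$. Once such a recursion is in hand, subtracting appropriate multiples of rows $r\geq 2$ from row $1$ kills the entries $P_{1s}$ for $s\geq 2$; a symmetric column operation kills the entries $P_{r1}$ for $r\geq 2$; and the $(1,1)$ entry collapses to its leading monomial $A_{11}$ as all lower-order pieces of $P_{11}$ are absorbed. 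Because the recursion is strictly upper-triangular in $(r,s)$, these operations modify the inner $M(2,2)$ block only by further invertible row/column operations, so induction finishes the proof.

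The main obstacle is writing down and verifying the recursion explicitly so that (i) all coefficients lie in $\zz[x_{ij}]$ rather than in some localization, and (ii) it is simultaneously upper-triangular in both indices, which is what preserves the inner block. I would attempt a bijective proof, pairing monomials across $P_{rs}$ and its right/down neighbors. As a backup, should the bijection prove unwieldy, I would fall back on Theorem~\ref{thm:minors}: it suffices to show that for each $k$ the gcd of the $k\times k$ minors of $M(1,1)$ equals $A_{\rho+2-k,\rho+2-k}\cdots A_{\rho+1,\rho+1}$. The bottom-right $k\times k$ submatrix is $M(\rho+2-k,\rho+2-k)$, whose determinant is this product (by the determinantal statement applied inductively), realizing the gcd. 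The remaining content is then to show that every other $k\times k$ minor is divisible by $A_{\rho+2-k,\rho+2-k}\cdots A_{\rho+1,\rho+1}$, which I would attack via a Lindström--Gessel--Viennot-style interpretation of $P_{rs}$ as a generating function for non-intersecting lattice paths, together with a pigeonhole argument forcing shared factors in any $k$ paths drawn from the rows of $M(1,1)$.
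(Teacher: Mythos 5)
Your overall architecture --- induct on $\rho$, observe that the lower-right $\rho\times\rho$ block of $M(1,1)$ is exactly $M(2,2)$ for the partition $\lambda(2,2)$ of rank $\rho-1$, and reduce $M(1,1)$ to $\mathrm{diag}(A_{11},M(2,2))$ by operations involving only the first row and column --- is precisely the route the paper points to: \cite{b-s} proves the theorem by row and column operations, given both explicitly and via an inductive existence argument. (Your worry about disturbing the inner block is moot: adding multiples of other rows/columns to row~$1$/column~$1$ leaves rows and columns $2,\dots,\rho+1$ completely untouched.) The genuine gap is at the engine. A row operation adds one fixed polynomial multiple of a lower row to row~$1$, the same coefficient in every column, so what you must produce is the uniform identity: there exist $c_2,\dots,c_{\rho+1}\in\zz[x_{ij}]$ with $P_{1s}=\sum_{r\geq 2}c_rP_{rs}$ for all $s\geq 2$ and $P_{11}-\sum_{r\geq 2}c_rP_{r1}=A_{11}$, together with the companion identity clearing column~$1$ below the corner. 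Your proposed per-entry recursion $P_{rs}=A_{rs}+(\text{combination of }P_{r's'},\ r'>r\text{ or }s'>s)$ does not translate into row or column operations (you cannot use entries from other columns to fix a single entry), and you neither state nor verify the uniform version; since that identity \emph{is} the theorem for the first row and column, the core of the proof is missing. For $\lambda=(3,2)$ with the variables of Figure~\ref{fig2} the coefficients are $c_2=bc+c+1$ and $c_3=-bc$, which indicates the general shape, but proving the general identity (this is the content of \cite{b-s}) is the real work.

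Your fallback does not repair this. Theorem~\ref{thm:minors} runs in one direction only: \emph{if} an SNF exists over a UFD, then the products $\alpha_1\cdots\alpha_k$ are the gcds of the $k\times k$ minors. Over $\zz[x_{ij}]$, which is not an elementary divisor ring, showing that the gcd of the $k\times k$ minors equals $A_{\rho+2-k,\rho+2-k}\cdots A_{\rho+1,\rho+1}$ would only tell you what the SNF must be \emph{if it exists}; existence is exactly the nontrivial assertion here. The paper's own example $\mathrm{diag}(2,x)$ over $\zz[x]$ makes the point: its minor gcds are consistent with $\mathrm{diag}(1,2x)$, yet no SNF exists. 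The gcd-of-minors method suffices for Theorem~\ref{thm:jt} because that computation takes place over the PID $\qq[n]$, not over a multivariate polynomial ring. So either carry out your primary plan by actually establishing the uniform row and column identities (or the inductive existence of the operations, as in \cite{b-s}), or discard the minor-gcd and Lindstr\"om--Gessel--Viennot backup as a proof of this statement.
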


Theorem~\ref{thm:bessen} is proved by finding row and column
operations converting $M(1,1)$ to SNF. In \cite{b-s} this is done in
two ways: an explicit description of the row and column operations,
and a proof by induction that such operations exist without stating
them explicitly. 

Another way to describe the SNF of $M(1,1)$ is to replace its
nondiagonal entries with 0 and a diagonal entry with its leading term
(unique monomial of highest degree). Is there some conceptual reason
why the SNF has this simple description?

If we set each $x_{ij}=1$ in $M(1,1)$ then we get $\det
M(1,1)=1$. This formula is equivalent to result of Carlitz, Roselle,
and Scoville \cite{c-r-s} which answers a question posed by Berlekamp
\cite{berl1}\cite{berl2}. If we set each $x_{ij}=q$ in $M(1,1)$ and
take $\lambda=(m-1,m-2,\dots,1)$, then the entries of $M(1,1)$ are
certain $q$-Catalan numbers, and $\det M(1,1)$ was determined by
Cigler \cite{cigler1}\cite{cigler2}. This determinant (and some
related ones) was a primary motivation for \cite{b-s}. Miller and
Stanton \cite{m-r} have generalized the $q$-Catalan result to Hankel
matrices of moments of orthogonal polynomials and some other similar
matrices. 

Di Francesco \cite{difran} shows that the polynomials $P_{rs}$ satisfy
the ``octahedron recurrence'' and are related to cluster algebras,
integrable systems, dimer models, and other topics. 

\section{The Varchenko matrix}
Let $\ca$ be a finite arrangement (set) of affine hyperplanes in
$\rr^n$. The complement $\rr^n-\bigcup_{H\in\ca}H$ consists of a
disjoint union of finitely many open \emph{regions}. Let $\calr(\ca)$
denote the 
set of all regions. For each hyperplane $H\in\ca$ associate an
indeterminate $a_H$.  If $R,R'\in \calr(\ca)$ then let sep$(R,R')$
denote the set of $H\in\ca$ separating $R$ from $R'$, that is, $R$ and
$R'$ lie on different sides of $H$. Now define a matrix $V(\ca)$ as
follows. The rows and columns are indexed by $\calr(\ca)$ (in some
order). The $(R,R')$-entry is given by
  $$ V_{RR'} = \prod_{H\in \mathrm{sep}(R,R')} a_H. $$
If $x$ is any nonempty intersection of a set of hyperplanes
in $\ca$, then define $a_x=\prod_{H\supseteq x}a_H$.   Varchenko
\cite{varchenko} showed that
  \beq \det V(\ca) = \prod_x (1-a_x^2)^{n(x)p(x)}, \label{eq:vardet}
  \eeq
for certain nonnegative integers $n(x),p(x)$ which we will not define
here. 

\textsc{Note.} We include the intersection $x$ over the empty set of
hyperplanes, which is the ambient space $\rr^n$.  This gives an
irrelevant factor of 1 in the determinant above, but it also accounts
for an essential diagonal entry of 1 in Theorem~\ref{thm:g-z} below.

Since $\det V(\ca)$ has such a nice factorization, it is natural to ask
about its diagonal form or SNF. Since we are working over the
polynomial ring $\zz[a_H\st H\in\ca]$ or $\qq[a_H\st H\in\ca]$, there
is no reason for a diagonal form to exist. Gao and 
Zhang \cite{g-z} found the condition for this property to hold. We say
that $\ca$ is \emph{semigeneric} or \emph{in semigeneral form} if for
any $k$ hyperplanes $H_1,\dots,H_k\in\ca$ with intersection
$x=\bigcap_{i=1}^k H_i$, either codim$(x)=k$ or $x=\emptyset$. (Note
that $x$ is an affine subspace of $\rr^n$ so has a well-defined
codimension.) In particular, $x=\emptyset$ if $k>n$.

\begin{thm} \label{thm:g-z}
The matrix $V(\ca)$ has a diagonal form if and only if $\ca$ is
semigeneric. In this case, the diagonal entries of $\ca$ are given by 
$\prod_{H\supseteq x}(1-a_H^2)$, where $x$ is a nonempty intersection
of the hyperplanes in some subset of $\ca$.
\end{thm}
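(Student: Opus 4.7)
The plan is to induct on $|\ca|$ via deletion--restriction. Fix $H_0\in\ca$ and set $\ca'=\ca\setminus\{H_0\}$; let $\ca^{H_0}=\{H\cap H_0\st H\in\ca',\,H\cap H_0\neq\emptyset\}$ be the induced arrangement on $H_0$. Semigenericity descends to both: for $\ca^{H_0}$, applying the semigeneric condition of $\ca$ to $\{H_0,H_1,\dots,H_j\}$ with nonempty intersection forces codimension $j+1$ in $\rr^n$, hence codimension $j$ inside $H_0$.

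For the existence direction, recall that each region $R\in\calr(\ca')$ either stays intact in $\ca$ or is cut by $H_0$ into two pieces $R^+,R^-$, and the split pairs are in canonical bijection with $\calr(\ca^{H_0})$. Writing $\pi\colon\calr(\ca)\to\calr(\ca')$ for the map that forgets $H_0$ and $\sigma(R)\in\{+,-\}$ for the side of $H_0$ on which $R$ lies, the Varchenko matrix obeys the factorization
\[
V_{R,S}(\ca)=a_{H_0}^{\,[\sigma(R)\neq\sigma(S)]}\,V_{\pi(R),\pi(S)}(\ca').
\]
For each split pair I would perform the row and column operations $\mathrm{row}(R^+)\mapsto\mathrm{row}(R^+)-a_{H_0}\,\mathrm{row}(R^-)$ and $\mathrm{col}(R^+)\mapsto\mathrm{col}(R^+)-a_{H_0}\,\mathrm{col}(R^-)$. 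Using the displayed identity, a short entry-by-entry check shows that these pull a common factor $(1-a_{H_0}^2)$ out of the $R^+$ rows and columns; after completing similar clean-up operations relating uncrossed regions to their opposite-side counterparts across $H_0$, the matrix block-diagonalizes as $V(\ca')\oplus(1-a_{H_0}^2)V(\ca^{H_0})$. Applying induction to each block and noting that intersections $x$ of $\ca$ with $H_0\supseteq x$ correspond to intersections of $\ca^{H_0}$ (picking up the factor $1-a_{H_0}^2$ from the scalar) while those with $H_0\not\supseteq x$ correspond to intersections of $\ca'$, the diagonal entries of $V(\ca)$ become exactly $\prod_{H\supseteq x,\,H\in\ca}(1-a_H^2)$ as $x$ ranges over nonempty intersections of $\ca$.

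For the non-existence direction, suppose $\ca$ is not semigeneric, witnessed by hyperplanes $H_1,\dots,H_k$ with $\bigcap H_i\neq\emptyset$ of codimension less than $k$. I would reduce to the minimal failing case -- three concurrent lines in $\rr^2$ -- and examine the corresponding $6\times 6$ matrix $V(\ca)$ directly. The strategy is to compute $\coker V(\ca)$ over $\zz[a_{H_1},a_{H_2},a_{H_3}]$ and show that it fails to decompose as a direct sum of cyclic modules; equivalently, one exhibits an ideal of $k\times k$ minors that is not principal. Either version rules out a diagonal form, since $V(\ca)\snf (\alpha_1,\dots)$ would force $\coker V(\ca)\cong\bigoplus R/\alpha_i R$. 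A localization argument at the failing flat then transports this obstruction to arbitrary non-semigeneric $\ca$.

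The main obstacle is the non-existence direction: Varchenko's determinant formula \eqref{eq:vardet} continues to produce a clean factorization of $\det V(\ca)$ even in the non-semigeneric case, so the obstruction is invisible at the top-dimensional level and must be detected in lower-order minors -- or, equivalently, in the non-cyclic structure of $\coker V(\ca)$. Within the existence direction, the trickier step is the row/column bookkeeping: operations on split pairs do not by themselves fully block-diagonalize, and the remaining uncrossed--crossed interactions must be neutralized by additional operations exploiting the $\pi,\sigma$ decomposition before one can legitimately invoke the inductive hypothesis on $V(\ca')$ and $V(\ca^{H_0})$.
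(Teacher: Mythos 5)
A preliminary remark: the paper contains no proof of Theorem~\ref{thm:g-z} to compare against --- it is a survey statement quoted from Gao and Zhang \cite{g-z}, who prove it in the more general setting of pseudosphere arrangements (oriented matroids). So your proposal must be judged on its own terms, and on those terms both directions have genuine gaps. In the existence direction, your identity $V_{R,S}(\ca)=a_{H_0}^{[\sigma(R)\neq\sigma(S)]}V_{\pi(R),\pi(S)}(\ca')$ and the split-pair operations are fine as far as they go, but what they leave behind is more than ``clean-up'': after those operations the $R^+$ rows and columns still carry nonzero entries $(1-a_{H_0}^2)V_{\pi(R),\pi(U)}(\ca')$ against every \emph{uncrossed} region $U$ on the positive side, and the untouched block indexed by the $R^-$'s together with the uncrossed regions is not $V(\ca')$ but a twisted version with extra factors $a_{H_0}$ between opposite-side regions. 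Producing operations that simultaneously kill these couplings and strip the $a_{H_0}$'s out of the residual block --- i.e., the lemma $V(\ca)\sim V(\ca')\oplus(1-a_{H_0}^2)V(\ca^{H_0})$ over $\zz[a_H]$ --- is exactly the technical heart of the theorem, and you assert it rather than prove it. (Already for two points in $\rr^1$ the extra operations leave a block such as one with entries $1-a^2(1-b^2)$ and $ab$, which must then be argued equivalent to $V(\ca')$.) You should also say explicitly where semigenericity enters this bookkeeping: it guarantees that distinct hyperplanes of $\ca'$ meet $H_0$ in distinct hyperplanes of $\ca^{H_0}$, which is what makes the identification $a_{H\cap H_0}=a_H$, and hence the matrix $V(\ca^{H_0})$ in your decomposition, well defined.

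The non-existence direction is in worse shape. First, the criterion ``an ideal of $k\times k$ minors is not principal'' does \emph{not} rule out a diagonal form: the matrix $\diag(a,b)$ over $\zz[a,b]$ has non-principal ideal of $1\times 1$ minors and is literally diagonal. Non-principality of minor ideals obstructs Smith normal form only; for diagonal form the usable obstruction is your other one (the cokernel is not a direct sum of cyclic modules), and you never actually carry out that computation, even for the $6\times 6$ example. Second, ``reduce to the minimal failing case --- three concurrent lines'' is not a valid reduction: semigenericity can fail with no codimension-two degeneration at all, e.g.\ four planes through a common point of $\rr^3$ with every three of them independent, so infinitely many essentially different minimal failures must be handled. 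Third, the ``localization argument'' that transports the obstruction is unjustified: specializing $a_H=1$ for all $H$ not containing the bad flat $x$ turns $V(\ca)$ into the pullback of $V(\ca_x)$ along the surjection $\calr(\ca)\to\calr(\ca_x)$ --- a larger matrix with repeated rows and columns --- not into $V(\ca_x)$ itself, so a diagonal form for $V(\ca)$ is not known to induce one for the local (or sub-) arrangement. As written, the ``only if'' half is essentially missing.
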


Gao and Zhang actually prove their result for pseudosphere
arrangements, which are a generalization of hyperplane
arrangements. Pseudosphere arrangements correspond to oriented
matroids. 

\begin{ex}
Let $\ca$ be the arrangement of three lines in $\rr^2$ shown in
Figure~\ref{fig:arrex}, with the hyperplane variables $a,b,c$ as in
the figure. This arrangement is semigeneric. The diagonal entries
of the diagonal form of $V(\ca)$ 
are 
  $$ 1,\ 1-a^2,\ 1-b^2,\ 1-c^2, (1-a^2)(1-c^2),\ (1-b^2)(1-c^2). $$
\end{ex}

\begin{figure}
\centerline{\includegraphics[width=6cm]{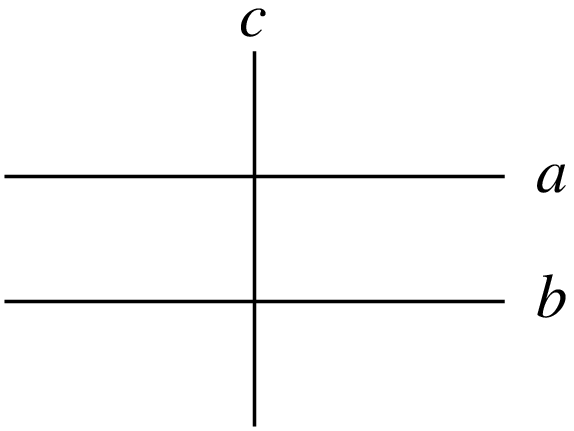}}
\caption{An arrangement of three lines               }
\label{fig:arrex}
\end{figure}

Now define the \emph{$q$-Varchenko matrix} $V_q(\ca)$ of $\ca$ to be
the result of substituting $a_H=q$ for all $H\in\ca$. Equivalently,
$V_q(\ca)_{RR'} = q^{\#\mathrm{sep}(R,R')}$. The SNF of $V_q(\ca)$
exists over the PID $\qq[q]$, and it seems to be a very interesting
and little studied problem to determine this SNF. Some special cases
were determined by Cai and Mu \cite{c-s}. A generalization related to
distance matrices of graphs was considered by Shiu \cite{shiu}. Note
that by equation~\eqref{eq:vardet} the diagonal entries of the SNF of
$V_q(\ca)$ will be products of cyclotomic polynomials $\Phi_d(q)$.

  The main paper to date on the SNF of $V_q(\ca)$ is by Denham and
  Hanlon \cite{d-h}. In particular, let 
  $$ \chi_\ca(t)=\sum_{i=0}^n (-1)^i c_i t^{n-i} $$ 
  be the \emph{characteristic polynomial} of
  $\ca$, as defined for instance in \cite[\textsection
    1.3]{rs:hyp}\cite[\textsection 3.11.2]{ec1}.  
  Denham and Hanlon show the following in their Theorem~3.1.

  \begin{thm}
 Let $N_{d,i}$ be the number of diagonal entries of the SNF of
 $V_q(\ca)$ that are exactly divisible by $\Phi_d(q)^i$. Then $N_{1,i}= 
  c_i$. 
  \end{thm}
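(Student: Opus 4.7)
The plan is to prove the identity by induction on the number of hyperplanes $|\ca|$, using the deletion--restriction recursion
$$ \chi_\ca(t) = \chi_{\ca\setminus H}(t) - \chi_{\ca^H}(t),\qquad\text{equivalently}\qquad c_i(\ca) = c_i(\ca\setminus H) + c_{i-1}(\ca^H). $$
The base case $\ca=\emptyset$ is trivial: $V_q=[1]$, so $N_{1,0}=1$ and $N_{1,i}=0$ otherwise, matching $\chi_\emptyset(t)=t^n$. For the inductive step I localize at the prime $(q-1)$ of $\qq[q]$. Let $T=q-1$ and $R=\qq[q]_{(q-1)}$, a DVR. Over $R$ the cokernel of $V_q(\ca)$ decomposes as $\bigoplus_j R/(T^{e_j})$, and $N_{1,i}=\#\{j:e_j=i\}$.

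Two elementary facts about Smith normal form over a DVR guide the argument: (i) for a block-diagonal matrix the multiset of $T$-valuations is the disjoint union of the blocks' multisets, and (ii) multiplying a matrix by $(q-1)$ times a unit of $R$ shifts all its $T$-valuations by $+1$. Since $1-q^2=-(q-1)(q+1)$ and $q+1$ is a unit in $R$, the inductive step reduces to establishing the block decomposition
$$ V_q(\ca)\ \sim\ \begin{pmatrix} V_q(\ca\setminus H) & 0 \\ 0 & (1-q^2)\,V_q(\ca^H) \end{pmatrix}\qquad\text{over }\qq[q], $$
from which $N_{1,i}(\ca) = N_{1,i}(\ca\setminus H) + N_{1,i-1}(\ca^H)$, matching the $c_i$-recursion.

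To produce the block decomposition, fix $H\in\ca$ and classify each region of $\ca\setminus H$ as either \emph{split} by $H$ into two regions $R_\pm$ of $\ca$, or \emph{unsplit} and surviving as a single region of $\ca$. Ordering the rows and columns of $V_q(\ca)$ as ``$R_+$ or unsplit'' followed by ``$R_-$,'' the $(+,+)$ block is $V_q(\ca\setminus H)$, the $(-,-)$ block is the submatrix of $V_q(\ca\setminus H)$ indexed by split regions, and each off-diagonal block is $q$ times the analogous rectangular submatrix (the factor of $q$ accounting for the one extra hyperplane $H$ now separating the two sides). The row operation ``replace row $R_-$ by row $R_- - q\cdot\mathrm{row}\,R_+$'' for every paired $(R_+,R_-)$ zeros out the bottom-left block and turns the bottom-right block into $(1-q^2)$ times the submatrix of $V_q(\ca\setminus H)$ on split regions; a symmetric column operation zeros out the top-right block. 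Under the natural bijection ``split region of $\ca\setminus H$ $\leftrightarrow$ region of $\ca^H$,'' the resulting $(-,-)$ block is precisely $V_q(\ca^H)$, because a hyperplane $H'\in\ca\setminus H$ separates two split regions exactly when $H'\cap H$ is a hyperplane of $\ca^H$ separating the corresponding restricted regions.

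The main obstacle is this last identification when $\ca$ contains hyperplanes parallel to $H$, or when distinct $H',H''\in\ca\setminus H$ satisfy $H'\cap H=H''\cap H$. The first case is handled by the observation that a hyperplane parallel to $H$ cannot separate two split regions (both such regions meet both sides of $H$ and therefore lie entirely on the same side of any hyperplane parallel to $H$). The second case forces $\ca^H$ to be interpreted as a multi-arrangement, and one must check that the characteristic polynomial recursion also holds with this multiplicity convention. Once this bookkeeping is settled, the inductive step closes and the theorem follows.
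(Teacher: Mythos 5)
The survey states this theorem without proof (it is Denham and Hanlon's Theorem~3.1), so your argument has to stand on its own, and its central lemma --- the equivalence of $V_q(\ca)$ with the block matrix $V_q(\ca\setminus H)\oplus(1-q^2)V_q(\ca^H)$ --- is not established by the operations you describe, and over $\qq[q]$ it is in fact false. Your bookkeeping implicitly assumes that every region of $\ca\setminus H$ not split by $H$ lies on the chosen ``$+$'' side of $H$; in general unsplit regions occur on both sides. Take three parallel lines $H_1,H_2,H_3$ and $H=H_2$: the region $A$ to the left of $H_1$ is unsplit and lies on the $-$ side, while the region $B$ between $H_1$ and $H_3$ splits into $B_\pm$. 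Then the $(+,+)$ block is not $V_q(\ca\setminus H)$ (its $(A,B_+)$ entry is $q^2$, not $q$), the off-diagonal blocks are not uniformly $q$ times a submatrix (the $(B_-,A)$ entry is $q$, with no extra factor), and after replacing row $B_-$ by row $B_- - q\cdot\mathrm{row}\,B_+$ the entry in column $A$ is $q-q\cdot q^2=q(1-q^2)\neq 0$, so the bottom-left block does not vanish. Worse, the asserted $\qq[q]$-equivalence cannot hold at all in general: for three concurrent lines in $\rr^2$ with $H$ one of them, $\det V_q(\ca)=(1-q^2)^6(1-q^6)$ up to sign, whereas your block matrix has determinant $(1-q^2)^7$ (simple restriction) or $(1-q^2)^6(1-q^4)$ (multiarrangement restriction); equivalence over $\qq[q]$ preserves the determinant up to a nonzero rational constant, so no sequence of row and column operations over $\qq[q]$ produces your block form.

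What you actually need is only the statement after localizing at $(q-1)$, namely equality of the multisets of $(q-1)$-valuations of the invariant factors; that weaker statement is true, but it is essentially the inductive step of the theorem itself, and once the explicit operations break down you have given no argument for it. The multiarrangement point is also not mere bookkeeping: your induction hypothesis is the theorem for honest arrangements, so to close the recursion you would have to state and prove the result for weighted Varchenko matrices of restrictions from the outset. For comparison, Denham and Hanlon do not argue by deletion--restriction at all; they study the degeneration of the bilinear form at $q=1$ (the $(q-1)$-adic filtration) and identify the ranks of its graded pieces with the Whitney numbers $c_i$ of the intersection lattice. If you wish to rescue a deletion--restriction proof, you need a genuinely different pairing or filtration argument valid over the localization, not the literal block decomposition above.
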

  
It is easy to see that $N_{1,i}=N_{2,i}$. Thus the next step would be
to determine $N_{3,i}$ and $N_{4,i}$. 

An especially interesting hyperplane arrangement is the \emph{braid
  arrangement} $\cb_n$ in $\rr^n$, with hyperplanes $x_i=x_j$ for
$1\leq i<j\leq n$. The determinant of $V_q(\cb_n)$, originally due to
Zagier \cite{zagier}, is given by
  $$ \det V_q(\cb_n) = \prod_{j=2}^n\left( 1-q^{j(j-1)}\right)
       ^{\binom nj(j-2)!\,(n-j+1)!}. $$
An equivalent description of $V_q(\cb_n)$ is the following. Let $\sn$
denote the symmetric group of all permutations of $1,2,\dots,n$, and
let inv$(w)$ denote the number of inversions of $w\in\sn$, i.e.,
inv$(w) =\#\{(i,j)\st 1\leq i<j\leq n,\ w(i)>w(j)\}$. Define
$\Gamma_n(q) =\sum_{w\in\sn}q^{\mathrm{inv}(w)}w$, an element of the
group algebra $\qq[q]\sn$. The element $\Gamma_n(q)$ acts on
$\qq[q]\sn$ by left multiplication, and $V_q(\cb_n)$ is the matrix of
this linear transformation (with a suitable indexing of rows and
columns) with respect to the basis $\sn$. The SNF of $V_q(\cb_n)$
(over the PID $\qq[q]$) is not known. Denham and Hanlon
\cite[\textsection 5]{d-h} compute it for $n\leq 6$.

Some simple representation theory allows us to refine the SNF of
$V_q(\cb_n)$. The complex irreducible representations
$\varphi_\lambda$ of $\sn$ are indexed by partitions $\lambda\vdash
n$. Let $f^\lambda=\dim \varphi_\lambda$. The action of $\sn$ on
$\qq\sn$ by right multiplication commutes with the action of
$\Gamma_n(q)$. It follows (since every irreducible representation of
$\sn$ can be defined over $\zz$) that by a unimodular change of basis
we can write
  $$ V_q(\cb_n) = \bigoplus_{\lambda\vdash n} f^\lambda V_\lambda, $$
for some integral matrices $V_\lambda$ of size $f^\lambda\times
f^\lambda$. Thus computing $\det V_\lambda$ and the SNF of $V_\lambda$
is a refinement of computing $\det V_q(\cb_n)$ and the SNF of
$V_q(\cb_n)$. (Computing the SNF of each $V_\lambda$ would give a
diagonal form of $V_q(\cb_n)$, from which it is easy to determine the
SNF.) The problem of computing $\det V_\lambda$ was solved by Hanlon
and Stanley \cite[Conj.~3.7]{h-s}. Of course the SNF of $V_\lambda$
remains open since the same is true for $V_q(\cb_n)$. Denham and
Hanlon have computed the SNF of $V_\lambda$ for
$\lambda\vdash n\leq 6$ and published the results for $n\leq 4$ in
\cite[\textsection 5]{d-h}. For instance, for the partitions
$\lambda\vdash 4$ we have the following diagonal elements of the SNF
of $V_\lambda$: 
  $$ \begin{array}{rl}
     (4): & \Phi_2^2\Phi_3\Phi_4\\
     (3,1): & \Phi_1\Phi_2,\ \Phi_1^2\Phi_2^2\Phi_3,\ \Phi_1^3\Phi_2^3
             \Phi_3^2\\
     (2,2): & \Phi_1^2\Phi_2^2,\ \Phi_1^2\Phi_2^2\Phi_{12}\\
     (2,1,1): & \Phi_1\Phi_2,\ \Phi_1^2\Phi_2^2\Phi_6,\
              \Phi_1^3\Phi_2^3\Phi_6^2\\ 
     (1,1,1,1): & \Phi_1\Phi_2\Phi_4\Phi_6, \end{array} $$   
where $\Phi_d$ denotes the cyclotomic polynomial whose zeros are the
primitive $d$th roots of unity. For a nice survey of this topic see
Denham and Hanlon \cite{d-h2}. 

The discussion above of $\Gamma_n$ suggests that it might be
interesting to consider the SNF of other elements of $R\sn$ for
suitable rings $R$ (or possibly $RG$ for other finite groups $G$). One
intriguing example is the \emph{Jucys-Murphy element} (though it first
appears in the work of Alfred Young \cite[\textsection 19]{young})
$X_k\in\qq \sn$, $1\leq k\leq n$. It is defined by $X_1=0$ and
  $$ X_k = (1,k)+(2,k)+\cdots+(k-1,k),\ \ 2\leq k\leq n, $$ 
where $(i,k)$ denotes the transposition interchanging $i$ and $k$.
Just as for $\Gamma_n(q)$, we can choose an integral basis for
$\qq\sn$ (that is, a $\zz$-basis for $\zz\sn$) so that the action of
$X_k$ on $\qq\sn$ with respect to this basis has a matrix of the form
$\bigoplus_{\lambda\vdash n} f^\lambda W_{\lambda,k}$.  The
eigenvalues of $W_{\lambda,k}$ are known to be the contents of the
positions occupied by $k$ in all standard Young tableaux of shape
$\lambda$. For instance, when $\lambda=(5,1)$ the standard Young
tableaux are
  $$ \begin{array}{lllll} 
     1\,2\,3\,4\,5 & 1\,2\,3\,4\,6 & 1\,2\,3\,5\,6 & 1\,2\,4\,5\,6 &
      1\,3\,4\,5\,6\\ 6 & 5 & 4 & 3 & 2
     \end{array}. $$
The positions occupied by 5 are $(1,5)$, $(2,1)$, $(1,4)$, $(1,4)$,
$(1,4)$. Hence the eigenvalues of $W_{(5,1),5}$ are $5-1=4$, 
  $1-2=-1$, and $4-1=3$ (three times). Darij Grinberg (private
  communication) computed the SNF of the matrices $W_{\lambda,k}$ for
  $\lambda\vdash n\leq 7$. On the basis of this data we make the
  following conjecture.

\begin{conj}
Let $\lambda\vdash n$, $1\leq k\leq n$, and $W_{\lambda,k}\snf
(\alpha_1,\dots,\alpha_{f^\lambda}) $. Fix $1\leq r\leq f^\lambda$. Let
$S_r$ be the set of positions $(i,j)$ that $k$ occupies in at least
$r$ of the SYT's of shape $\lambda$. 
Then $\alpha_{f^\lambda-r+1}=\pm\prod_{(i,j)\in S_r}(j-i)$.   
\end{conj}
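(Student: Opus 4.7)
The plan is to prove the conjecture by computing the greatest common divisor of all $s\times s$ minors of $W_{\lambda,k}$ for each $s$ and invoking Theorem~\ref{thm:minors}. The nesting $S_{r+1}\subseteq S_r$ forces the required divisibility chain among the $\alpha_i$, so the target reduces to showing
\[
  \gcd\bigl\{s\times s\text{ minors of }W_{\lambda,k}\bigr\}=\pm\prod_{r=1}^{s}\prod_{(i,j)\in S_{f^\lambda-r+1}}(j-i)\qquad(1\le s\le f^\lambda).
\]

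The principal structural input is that the Jucys-Murphy element $X_k$ commutes with $\S_{k-1}\subset\sn$. Over $\qq$, the branching rule gives $V_\lambda\downarrow_{\S_{k-1}}^{\sn}\cong\bigoplus_{\mu\vdash k-1}V_\mu\otimes W_\mu$ with $\dim W_\mu=f^{\lambda/\mu}$, and Schur's lemma forces $X_k=\bigoplus_\mu I_{V_\mu}\otimes Y_\mu$. Each $Y_\mu$ is itself of Jucys-Murphy type, acting on a multiplicity space indexed by skew standard Young tableaux of $\lambda/\mu$ whose first box is the position of $k$; its eigenvalues are the contents of the inner corners of $\lambda/\mu$, with multiplicities equal to the number of skew tableaux beginning at that corner. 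This sets up an induction on $n-k$: the base case $k=n$ is immediate, as each $W_\mu$ is one-dimensional and $Y_\mu$ is the scalar $c(\lambda/\mu)$; for the inductive step, the Smith normal form of $W_{\lambda,k}$ is assembled from the SNFs of the blocks $I_{V_\mu}\otimes Y_\mu$ via the prime-by-prime interleaving of invariant factors characteristic of a direct sum of integer matrices.

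The main obstacle is that the branching decomposition is only rational---it need not split integrally for the Specht module. A quick test confirms that purely spectral information is insufficient: for $\lambda=(3,2)$, $k=5$, a naive integral block decomposition would predict SNF $(2,2,0,0,0)$, whereas the conjectured answer is $(1,1,0,0,0)$, the discrepancy reflecting a subtle primitivity of the image lattice. Thus the conjecture is sensitive to the specific integral model of $V_\lambda$ used, namely the one arising from the unimodular change of basis that decomposes $\zz\sn$ into blocks $M_{f^\lambda}(\zz)$ as referenced in the excerpt---a lattice that differs from the Specht lattice by a finite-index inclusion. Controlling this integral structure---tracking how the extra denominators arising from the failure of $\zz\sn$ to be a maximal order in $\qq\sn$ interact with the minors of $W_{\lambda,k}$---is the crux. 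A potentially cleaner route is to imitate the proof of Theorem~\ref{thm:cai2} by enhancing to $W_{\lambda,k}+tI$ over $\zz[t]$, where the generic eigenvalues $t+c(T(k))$ are pairwise distinct; a careful row-and-column reduction there, followed by specialization at $t=0$, could sidestep the denominator issue entirely.
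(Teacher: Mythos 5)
There is no proof in the paper to compare against: the statement is stated (and intended) as an open conjecture, supported only by Darij Grinberg's computations for $\lambda\vdash n\leq 7$, and the paper explicitly notes that even the closely related SNF of $V_q(\cb_n)$ is unknown. So the question is whether your proposal actually closes the conjecture, and it does not. The plan reduces the problem, via Theorem~\ref{thm:minors}, to evaluating the gcd of all $s\times s$ minors of $W_{\lambda,k}$, but that evaluation is never carried out. The structural input you invoke --- the commutation of $X_k$ with $\mathfrak{S}_{k-1}$ and the branching decomposition $V_\lambda\downarrow_{\mathfrak{S}_{k-1}}\cong\bigoplus_\mu V_\mu\otimes W_\mu$ with $X_k$ acting as $\bigoplus_\mu I\otimes Y_\mu$ --- is a statement over $\qq$, and your own test case $\lambda=(3,2)$, $k=5$ shows that assembling the SNF block-by-block from this rational decomposition gives $(2,2,0,0,0)$ rather than the conjectured $(1,1,0,0,0)$. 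In other words, the one tool your induction would rest on is demonstrably insufficient, and the step that would repair it (``controlling this integral structure'') is exactly the content of the conjecture; you name it as the crux but supply no mechanism for it.

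The proposed detour through $W_{\lambda,k}+tI$ over $\zz[t]$ is likewise only a hope: $\zz[t]$ is not an elementary divisor ring (compare the paper's example $\mathrm{diag}(2,x)$ over $\zz[x]$), so the existence of an SNF of $W_{\lambda,k}+tI$ over $\zz[t]$ is itself a substantial claim that would have to be proved, not assumed by analogy with Theorem~\ref{thm:cai2}; moreover even granting it, one must still justify that specializing $t=0$ and comparing ranks recovers the integral invariant factors, which uses semisimplicity/symmetry properties of $[\psi_n]$ that $W_{\lambda,k}$ need not share in the chosen integral basis (and the answer is sensitive to which $\zz$-form of $V_\lambda$ is used, as you note). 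So the proposal is a reasonable research program, with a correct identification of where the difficulty lies, but it contains no completed argument for any case beyond what can be checked by hand, and the conjecture remains open.
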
 

Note in particular that every SNF diagonal entry is (conjecturally) a
product of some of the eigenvalues of $W_{\lambda,k}$.

For example, when $\lambda=(5,1)$ and $k=5$ we have $f^{(5,1)}=5$ and 
$S_1=\{(1,5),(2,1),(1,4)\}$, $S_2=S_3=\{(1,4)\}$, $S_4=S_5=
\emptyset$. Hence $W_{(5,1),5}\snf (1,1,3,3,12)$.

\end{document}